\newtheorem{thm}{Theorem}[section]
\newtheorem{cor}[thm]{Corollary}
\newtheorem{slem}[thm]{Sub-Lemma}
\newtheorem{prop}[thm]{Proposition}
\newtheorem{lemm}[thm]{Lemma}
\theoremstyle{definition}
\newtheorem{dfn}[thm]{Definition}
\newtheorem*{dfn*}{Definition}
\newtheorem{nota}[thm]{Notation}
\newtheorem{eg}[thm]{Example}
\newtheorem{notaterm}[thm]{Notation and Terminology}
\theoremstyle{remark}
\newtheorem{rmk}[thm]{Remark}
\newcommand{\la}{\langle}
\newcommand{\ra}{\rangle}
\newcommand{\op}{\mathrm}
\numberwithin{equation}{subsection}
\begin{document}

\title[Some examples of noncommutative Calabi-Yau schemes]
{Some examples of noncommutative projective Calabi-Yau schemes} 
\author{Yuki Mizuno}
\email{m7d5932a72xxgxo@fuji.waseda.jp}
\date{}
\address{Department~of~Mathematics, School~of~Science~and~Engineering, Waseda~University, Ohkubo~3-4-1, Shinjuku, Tokyo~169-8555, Japan}

\keywords{Noncommutative algebraic geometry, Calabi-Yau varieties}
\subjclass[2020]{14A22, 14J32}

\maketitle

\begin{abstract}
In this article, we construct some examples of noncommutative projective Calabi-Yau schemes by using noncommutative Segre products and quantum weighted hypersurfaces.
We also compare our constructions with commutative Calabi-Yau varieties and examples constructed in \cite{kanazawa2015}.
In particular, we show that some of our constructions are essentially new examples of noncommutative projective Calabi-Yau schemes.
\end{abstract}

\section{Introduction}
Calabi-Yau varieties are rich objects and play an important role in mathematics and physics.
In noncommutative geometry, (skew) Calabi-Yau algebras are often treated as noncommutative analogues of Calabi-Yau varieties.
Calabi-Yau algebras have a deep relationship with quiver algebras (\cite{ginzburg2006calabi}, \cite{vandenbergh2010calabi}).
For example, many known Calabi-Yau algebras are constructed by using quiver algebras.
They are also used to characterize Artin-Schelter regular algebras (\cite{reyes2022graded}, \cite{reyes2014}).
In particular, a connected graded algebra $A$ over a field $k$ is Artin-Schelter regular if and only if $A$ is skew Calabi-Yau.

On the other hand, a triangulated subcategory of the derived category of a cubic fourfold in $\mathbb{P}^5$,  which is obtained by some semiorthogonal decompositions, has the 2-shift functor $[2]$ as the Serre functor.
Moreover, the structure of Hochschild (co)homology is the same as that of a projective K3 surface (\cite{kuznetsov2019calabi}).
However, some such categories are not obtained as the derived categories of coherent sheaves of projective K3 surfaces and called noncommutative K3 surfaces.

Artin and Zhang constructed a framework of noncommutative projective schemes in \cite{artin1994}, which are defined from noncommutative graded algebras.
In this framework, we can think of Artin-Schelter regular algebras as noncommutative analogues of projective spaces, which are called quantum projective spaces.
Our objective is to produce examples of noncommutative projective Calabi-Yau schemes that are not obtained from commutative Calabi-Yau varieties.
In the future, it would be an interesting question to compare the derived category of a noncommutative projective Calabi-Yau scheme created in the framework of Artin-Zhang's noncommutative projective schemes with 
 a noncommutative K3 surface obtained as a triangulated subcategory of the derived category of a cubic fourfold.

As the definition of noncommutative projective Calabi-Yau schemes, we adopt the definition introduced by Kanazawa (\cite{kanazawa2015}).
His definition is a direct generalization of the definition of commutative Calabi-Yau varieties to noncommutative projective schemes.
He also constructed the first examples of noncommutative projective Calabi-Yau schemes that are not isomorphic to commutative Calabi-Yau varieties as hypersurfaces of quantum projective spaces.
Recently, some examples constructed by Kanazawa play an important role in noncommutative Donaldson-Thomas theory (\cite{liu2019donaldson}, \cite{liu2020donaldson}).

In this paper, we construct new examples of noncommutative projective Calabi-Yau schemes by using noncommutative Segre products and weighted hypersurfaces.
There are many known examples of Calabi-Yau varieties in algebraic geometry.
Some of them are complete intersections in Segre embeddings of products of projective spaces.
Moreover, Reid gave a list of Calabi-Yau surfaces, which are hypersurfaces in weighted projective spaces (\cite{iano2000}, \cite{reid1980}).
Motivated by these two facts, we construct noncommutative analogues of the two types of examples of Calabi-Yau varieties (Theorem \ref{thm1}, Theorem \ref{thm2}) in Section \ref{sec2}.

In order to prove that a noncommutative projective scheme is Calabi-Yau, we use the methods of Kanazawa. 
However, they are not sufficient because the algebras we treat are more complicated than the ones he considered.
In order to construct noncommutative projective Calabi-Yau schemes as noncommutative analogues of complete intersections in Segre products, we perform a more detailed analysis of noncommutative projective schemes defined by $\mathbb{Z}^2$-graded algebras, which were studied by Van Rompay (\cite{rompay1996483}).
A different approach to noncommutative Segre products is also studied in \cite{he2021twisted}. 
In order to construct noncommutative projective Calabi-Yau schemes as noncommutative analogues of weighted hypersurfaces, we consider quotients of weighted quantum polynomial rings.
In commutative algebraic geometry, the projective spectrum $\op{Proj}(k[x_0,\cdots,x_n])$ of a weighted polynomial ring is not necessarily isomorphic to $\op{qgr}(k[x_0,\cdots,x_n])$, where $\op{qgr}(k[x_0,\cdots,x_n])$ is the quotient category associated to $k[x_0,\cdots,x_n]$ constructed in \cite{artin1994}.
However, $\op{qgr}(k[x_0,\cdots,x_n])$ is thought of as a nonsingular model of $\op{Proj}(k[x_0,\cdots,x_n])$ (see \cite[Example 4.9]{smith2004maps}).
We use this idea to construct new noncommutative projective Calabi-Yau schemes.
In addition, it should be noted that local structures of noncommutative projective schemes of quotients of weighted quantum polynomial rings are somewhat complicated.
An analysis of the local structures was performed by Smith (\cite{smith2004maps}).
We show that the local structure obtained in \cite{smith2004maps} is described by the notion of quasi-Veronese algebras introduced by Mori (\cite{mori2013b}).

In Section \ref{ptmods}, we compare our constructions from weighted hypersurfaces in Section \ref{sec2} with commutative Calabi-Yau varieties and the first examples constructed in \cite{kanazawa2015}, focusing on noncommutative projective Calabi-Yau schemes of dimensions 2.
We show that some of our constructions in Section \ref{sec2} are not isomorphic to any of the commutative Calabi-Yau varieties and the first examples constructed in \cite{kanazawa2015} (Proposition \ref{110549_8Sep23}).
When we consider moduli spaces of point modules of noncommutative projective schemes obtained from weighted hypersurfaces in Section \ref{sec2}, there is a problem, which is in general weighted quantum polynomial rings are not generated in degree $1$.
So, the notion of point modules is not necessarily useful in this case.
In this paper, we use theories of closed points studied in \cite{mori2015regular}, \cite{smith-noncommutative} and \cite{smith2002subspaces}, etc.
A different approach to closed points of weighted quantum polynomial rings is studied in \cite{stephenson200070}.
The notion of point modules defined in \cite{stephenson200070} corresponds to those of ordinary and thin points in \cite{mori2015regular}.
To show that some of our constructions are not isomorphic to the examples obtained in \cite{kanazawa2015}, we use Morita theory of noncommutative schemes, which is established in \cite{burban2022morita} (see also \cite[Section 6]{artin1994}).
In the theory, we need to calculate the centers of noncommutative rings.
By using these calculations, we can do a detailed analysis and some classifications of noncommutative projective Calabi-Yau surfaces.


\section{Preliminaries}
\label{sec1}
\begin{notaterm}
 In this article, $k$ means an algebraically closed field of characteristic 0.
We suppose $\mathbb{N}$ contains $0$.
Let $A$ be a $k$-algebra, $M$ be an $A$-bimodule and $\psi, \phi$ be algebra automorphisms of $A$.
Then, we denote the associated $A$-bimodule by $^{\psi}M^{\phi}$, i.e. ${}^\psi M^{\phi} = M$ as $k$-modules and the new bimodule structure is given by $a*m*b := \psi(a)m\phi(b)$ for all $a,b \in A$ and all $m \in M$.
Let $\mathcal{C}$ be a $k$-linear abelian category.
We denote the global dimension of $\mathcal{C}$ by $\op{gl.dim}(\mathcal{C})$.
An $\mathbb{N}$-graded $k$-algebra $A$ is connected if $A_0=k$. 

For any $\mathbb{N}$-graded $k$-algebra $A = \bigoplus_{i=0}^{\infty}A_i$, we denote the category of graded right $A$-modules (resp. finitely generated graded right $A$-modules) by $\op{Gr}(A)$ (resp. $\op{gr}(A)$).
 Let $M \in \op{Gr}(A)$ and $A^\circ$ be the opposite algebra of $A$. 
 We define the Matlis dual $M^* \in \op{Gr}(A^\circ)$ by $M^*_i := \op{Hom}_{k}(M_{-i},k)$ and the shift $M(n) \in \op{Gr}(A)$ by $M(n)_i :=  M_{i+n}  \ (i,n \in \mathbb{Z})$.
For $M,N \in \op{Gr}(A)$, we write ${\op{Hom}}_A(M,N) := \bigoplus_{n \in \mathbb{Z}} \op{Hom}_{\op{Gr(A)}}(M,N(n)) \in \op{Gr}(A)$.
For $M \in \op{Gr}(A)$ and a homogeneous element $m \in M$, we denote the degree of $m$ by $\op{deg}(m)$.
We define the truncation $M_{\geq n} := \bigoplus_{i \geq n} M_i \in \op{Gr}(A) \ (n \in \mathbb{Z})$. 
An element $m \in M$ is called torsion if $mA_{\geq n}=0$ for $n \gg 0$.
We say $M$ is a torsion module if any element of $M$ is torsion. 
We denote the subcategory of torsion modules in $\op{Gr}(A)$ (resp. $\op{gr}(A)$) by $\op{Tor}(A)$ (resp. $\op{tor}(A)$). 
\end{notaterm}

\begin{dfn}[{\cite[Section 2]{artin1994}}]
 Let $A$ be a right noetherian $\mathbb{N}$-graded $k$-algebra.
We define the quotient categories $\op{QGr(A)} :=\op{Gr}(A)/\op{Tor}(A)$ and $\op{qgr(A)}:=\op{gr}(A)/\op{tor}(A)$. 
We denote the projection functor by $\pi$ and  its right adjoint functor by $\omega.$
The general (resp. noetherian) projective scheme of $A$ is defined as $\op{Proj}(A) := (\op{QGr}{(A)}, \pi(A))$ (resp. $\op{proj}(A) := (\op{qgr}{(A)}, \pi(A))$).
\label{no.1}
\end{dfn}

\begin{dfn}[{\cite[Section 2]{artin1994}, \cite[Chapter 3]{smith-noncommutative}}]
A quasi-scheme over $k$ is a pair $(\mathcal{C}, \mathcal{O})$ where $\mathcal{C}$ is a $k$-linear abelian category and $\mathcal{O}$ is an object in $\mathcal{C}$.
A morphism from a quasi-scheme $(\mathcal{C}, \mathcal{O})$ to another quasi-scheme $(\mathcal{C}', \mathcal{O}')$ is a pair $(F,\varphi)$ consisting of a $k$-linear right exact functor $F:\mathcal{C} \rightarrow \mathcal{C}'$ and an isomorphism $\varphi: F(\mathcal{O}) \overset{\simeq}{\rightarrow} \mathcal{O}'$. We call $(F,\varphi)$ is an isomorphism if $F$ is an equivalence.

When $A$ is as in Definition \ref{no.1}, we think of $\op{proj(A)} = (\op{qgr}(A), \pi(A))$ as a quasi-scheme.
For any (commutative) noetherian scheme $X$, $(\op{Coh}(X), \mathcal{O}_X)$ is also a quasi-scheme.
From this observation, we regard $X$ as a quasi-scheme.
\end{dfn}

\begin{dfn}[{\cite[Section 4]{vandenbergh1997}, \cite[Section4]{yekutieli1992}}]
 Let $A,B$ be $\mathbb{N}$-graded $k$-algebras and $m_{A}$ be $A_{\geq 1}$.
We define the torsion functor $\Gamma_{m_A}: \op{Gr}(A \otimes_k B^\circ) \rightarrow \op{Gr}(A \otimes_k B^\circ)$ by $\Gamma_{m_A}(M) := \{m \in M \mid mA_{\geq n}  = 0 \text{ for some }n \in \mathbb{N} \}$.
We write $H^i_{m_A}:=\op{R}^i\Gamma_{m_A}$.
\end{dfn}

\begin{dfn}[{\cite[Definition 6.1, 6.2]{vandenbergh1997}, \cite[Definition 3.3, 4.1]{yekutieli1992} }]
 Let $A$ be a right and left noetherian connected $\mathbb{N}$-graded $k$-algebra and $A^e$ be the enveloping algebra of $A$.
Let $R$ be an object of $\op{D^b}(\op{Gr}(A^e))$.
Then, $R$ is called a dualizing complex of $A$ if 
\begin{inparaenum}[(1)]
 \item $R$ has finite injective dimension over $A$ and $A^\circ$,
\item The cohomologies of $R$ are finitely generated as both $A$ and $A^\circ$-modules,
\item The natural morphisms $A \rightarrow \op{RHom}_A(R,R)$ and $A \rightarrow \op{RHom}_{A^\circ}(R,R)$ are isomorphisms in $\op{D^b}(\op{Gr}(A^e))$.
\end{inparaenum}
Moreover, $R$ is called balanced if $\op{R}\Gamma_{m_A}(R) \simeq A^*$ and $\op{R}\Gamma_{m_{A^\circ}}(R) \simeq A^*$ in $\op{D^b}(\op{Gr}(A^e))$.

\end{dfn}

\section{Calabi-Yau conditions}
\label{sec2}

\begin{dfn}[{\cite[Section 2.2]{kanazawa2015}}]
 Let $A$ be a connected right noetherian $\mathbb{N}$-graded $k$-algebra.
Then, $\op{proj}(A)$ is a projective Calabi-Yau $n$ scheme if the global dimension of $\op{qgr}(A)$ is $n$ and the Serre functor of the derived category $\op{D^b}(\op{qgr}(A))$ is the $n$-shift functor $[n]$. 
\end{dfn} 

\begin{rmk}
 Actually, we do not need the condition that the global dimension of $\op{qgr}(A)$ is $n$. 
If the Serre functor of the derived category $\op{D^b}(\op{qgr}(A))$ is the $n$-shift functor $[n]$, then we can easily show that this condition holds.
However, when we prove the existence of the Serre functor of  $\op{D^b}(\op{qgr}(A))$, we essentially need the condition that the global dimension of $\op{qgr}(A)$ is $n$ (cf. \cite[Theorem A.4, Corollary A.5]{de2004ideal}, Lemma \ref{duality}).
\end{rmk}
\subsection{{$\mathbb{Z}^2$}-graded algebras and Segre products}

In commutative algebraic geometry, when $X$ is the Segre embedding of $\mathbb{P}^{n} \times \mathbb{P}^{m}$ into $\mathbb{P}^{nm+n+m}$, a smooth complete intersection $Y \subset X$ of bidegrees $(n+1,0)$ and $(0,m+1)$ provides a Calabi-Yau variety.
We also have a little more complicated example that gives a Calabi-Yau variety.
That is a smooth complete intersection of bidegrees $(n,0)$ (resp. $(n+1,0)$) and $(1,n+1)$ in $\mathbb{P}^{n} \times \mathbb{P}^{n}$ (resp. $\mathbb{P}^{n+1} \times \mathbb{P}^{n}$).
We construct noncommutative analogues of these examples.


Let $C$ be an $\mathbb{N}^2$-graded $k$-algebra.
 We denote the category of $\mathbb{Z}^2$-graded right $C$-modules (resp. finitely generated $\mathbb{Z}^2$-graded right $C$-modules) by $\op{BiGr}(C)$ (resp. $\op{bigr}(C)$).
Let $M \in \op{BiGr}(C)$.
We denote by $C^\circ$ (resp. $C^e$) the opposite (resp. enveloping) algebra of $C$. 
We define the Matlis dual $M^{*} \in \op{BiGr}(C^\circ)$ by $M^*_{i,j} := \op{Hom}_k(M_{-i,-j},k)$ and the shift $M(n,m) \in \op{BiGr}(C) $ by $ M(m,n)_{i,j} := M_{i+m,j+n} \ (m, n, i, j \in \mathbb{Z})$.
For $M,N \in \op{BiGr}(C)$, we write ${\op{Hom}}_C(M,N):= \bigoplus_{m,n \in \mathbb{Z}}\op{Hom}_{\op{BiGr(C)}}(M,N(m,n))$.
For a bihomogeneous element $m \in M$, we denote the bidegree of $m$ by $\op{bideg}(m)$.

Let $M \in \op{BiGr}(C)$.
We define the truncation $M_{\geq n, \geq n} := \bigoplus_{i\geq n,j \geq n}M_{i,j} \in \op{BiGr}(C) \ (n \in \mathbb{Z})$.
We say $m \in M$ is torsion if $m C_{\geq n, \geq n} =0$ for $n \gg 0$.
If all $m \in M$ are torsion, then $M$ is called a torsion $C$-module.
We denote the category of $\mathbb{Z}^2$-graded torsion $C$-modules by $\op{Tor}(C)$.
We also define $\op{tor}(C)$ to be the intersection of $\op{bigr}(C)$ and $\op{Tor}(C)$.
When we assume that $C$ is right noetherian, we have the quotient categories $\op{QBiGr}(C):= \op{BiGr}(C) / \op{Tor}(C)$ and $\op{qbigr}(C) := \op{bigr}(C)/\op{tor}(C)$ (cf. \cite[Section 2]{rompay1996483}).
We  denote the projection functor by $\pi$ and its right adjoint functor by $\omega$.
We can define the general (resp. noetherian) projective scheme $\op{Proj}(C)$ (resp. $\op{proj}(C)$) associated to $C$ and the notion of projective Calabi-Yau schemes as in the case of $\mathbb{N}$-graded algebras.

Let $D$ be an $\mathbb{N}^2$-graded algebra.
We define $m_{C_{++}} := C_{ \geq 1, \geq 1}$ and the torsion functor $\Gamma_{m_{C_{++}}} : \op{BiGr}(C \otimes_k D^\circ) \rightarrow \op{BiGr}(C \otimes_k D^\circ)$ by $\Gamma_{m_{C_{++}}}(M) := \{m \in M \mid m C_{\geq n, \geq n} = 0 \text{ for some }n \in \mathbb{N} \}$.
We write $m_C:= \bigoplus_{i+j \geq 1 }C_{i,j}$ 
and  define another torsion functor $\Gamma_{m_C}:\op{BiGr}(C \otimes_k D^\circ) \rightarrow \op{BiGr}(C \otimes_k D^\circ)$ by $\Gamma_{m_C}(M) := \{m \in M \mid mC_{\geq n} = 0 \text{ for some } n \in \mathbb{N}\}$, where $C_{ \geq n}:= \bigoplus_{i+j \geq n} C_{i,j} \in \op{BiGr}(C)$.
See \cite[Section 3]{reyes2014} for details of $\Gamma_{m_C}$.
We write $H^i_{m_{C_{++}}}:= \op{R}^i\Gamma_{m_{C_{++}}}$ and $H^i_{m_{C}}:= \op{R}^i\Gamma_{m_{C}}$.


\begin{thm}
\label{thm1}
 Let $A := k\la x_0,\cdots, x_n \ra /(x_jx_i-q_{ji}x_ix_j)_{i,j}$, $B := k\la y_0,\cdots, y_m \ra / (y_jy_i-q'_{ji}y_iy_j)_{i,j}$ and $C := A \otimes_k B$, where $q_{ji}, q'_{ji} \in k^{\times}$ for all $i,j$. 
We regard $C$ as an $\mathbb{N}^2$-graded algebra with $\op{bideg}(x_i)=(1,0)$ and $\op{bideg}(y_i)=(0,1)$ for all $i$.
 
\begin{enumerate}
 \item 
Let $f := \sum_{i=0}^n x_{i}^{n+1} $ and $ g := \sum_{i=0}^m y_{i}^{m+1}$. 
We assume that 
\begin{inparaenum}[(i)]
\item $q_{ii} = q_{ij}q_{ji} = q_{ij}^{n+1} =1$ for all $i,j$,
\item $q'_{ii} = q'_{ij}q'_{ji} = q_{ij}'^{m+1} =1$ for all $i,j$.
\end{inparaenum}

Then, $\op{proj}(C/(f,g))$
is a projective Calabi-Yau $(n+m-2)$ scheme if and only if $\prod_{i=0}^n q_{ij}$ and $\prod_{i=0}^m q'_{ij}$ are independent of $j$, respectively.

\item
Suppose that $m=n+1$ (resp. $m=n$) and $q'_{ij}=1$ for all $i,j$. 
Let $f := \sum_{i=0}^{n} x_i^{n+1}y_i$ and $g := \sum_{i=0}^{n+1} y_i^{n+1}$ (resp. $\sum_{i=0}^{n} y_i^{n}$).
We assume that $q_{ii} = q_{ij}q_{ji} = q_{ij}^{n+1} = 1$ for all $i,j$.

Then, $\op{proj}(C/(f,g))$ is a projective Calabi-Yau $(2n-1) (\text{resp. }(2n-2))$ scheme if and only if $\prod_{i=0}^{n} q_{ij}$ is independent of $j$.

\label{thm1-2}
\end{enumerate}


\end{thm}

\begin{nota}
    For simplicity, we denote the bidegrees of $f,g$ in the theorem by $(d_0,d_1), (e_0,e_1)$, respectively.
\end{nota}


\begin{rmk}
\begin{itemize}
\item $f,g$ are central elements in $C$ because of the choices of $\{q_{ij}\}, \{q'_{ij}\}$.
\item We have $n+m-2 = d_0+d_1+e_0+e_1-4$ in (1). 
We have $2n-1 (\text{resp. } 2n-2) = d_0+d_1+e_0+e_1-4$ also in (2).

\item In (2) of the theorem, even if we do not assume $q'_{ij} = 1$, the condition for $f,g$ to be central in $C$ implies $q_{ij}'=1$ for all $i,j$ after all.


\end{itemize}

\end{rmk}


To prove the theorem, we need to show some lemmas.
Perhaps some experts may understand the following lemmas.
However, to the best of the author's knowledge, there are no references written on those lemmas, so the proofs are given below.
In addition, the following proofs do not depend on whether (1) or (2) in the theorem is considered (except for Lemma \ref{lem1-2}).


\begin{lemm} 
Let $\mathcal{R}:=\pi(\op{R}\Gamma_{m_{{C/(f,g)}_{++}}}(C/(f,g))^{*})$ and $ \mathcal{R}' := \pi(\op{R}\Gamma_{m_{C/(f,g)}}(C/(f,g))^*)$.
Then, the functors $-\otimes^{\mathbb{L}}\mathcal{R}$ and $ -\otimes^{\mathbb{L}}\mathcal{R}'[-1]$ between $\op{D(QBiGr}(C/(f,g)))$ and itself are naturally isomorphic.
\label{lem1-1}
\end{lemm}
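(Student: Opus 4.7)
The plan is to compare the two local cohomology complexes via a Mayer--Vietoris triangle and then show that the ``one-sided'' contributions vanish after passing to the quotient category. First, I would exploit the tensor product decomposition $D := C/(f,g) = \bar{A} \otimes_k \bar{B}$, where $\bar{A} := A/(f)$ and $\bar{B} := B/(g)$; this is legitimate since $f \in A$ and $g \in B$ are central and regular. Let $I \subset D$ be the two-sided ideal generated by $x_0,\dots,x_n$ (equivalently $\bar{m}_{\bar{A}} \cdot D$) and $J \subset D$ the one generated by $y_0,\dots,y_m$. A quick bidegree check gives $I + J = m_D$ and $I \cap J = D_{++}$.

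Second, I would invoke the Mayer--Vietoris distinguished triangle for local cohomology,
\[\op{R}\Gamma_{m_D}(D) \to \op{R}\Gamma_I(D) \oplus \op{R}\Gamma_J(D) \to \op{R}\Gamma_{D_{++}}(D) \to \op{R}\Gamma_{m_D}(D)[1],\]
which follows from $\Gamma_{I+J} = \Gamma_I \cap \Gamma_J$ together with a standard \v{C}ech/Grothendieck argument (the generators are normal in $D$, so the two-sided powers behave well). Taking the Matlis dual (contravariant, turning $[n]$ into $[-n]$) and rotating yields
\[\op{R}\Gamma_{D_{++}}(D)^* \to \op{R}\Gamma_I(D)^* \oplus \op{R}\Gamma_J(D)^* \to \op{R}\Gamma_{m_D}(D)^* \to \op{R}\Gamma_{D_{++}}(D)^*[1].\]

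Third, I would analyze the middle term. Because $I$ acts only through the $\bar{A}$-factor and $\bar{B}$ is $k$-flat, $\op{R}\Gamma_I(D) \simeq \op{R}\Gamma_{\bar{m}_{\bar{A}}}(\bar{A}) \otimes_k \bar{B}$. Since $\bar{A}$ is AS-Gorenstein (the quotient of the AS-regular quantum polynomial ring $A$ by the central regular element $f$ of degree $n+1$), Van den Bergh / Yekutieli local duality yields $\op{R}\Gamma_{\bar{m}_{\bar{A}}}(\bar{A}) \simeq \bar{A}^*(l)[-n]$ for some Gorenstein parameter $l$ (with a Nakayama twist that does not affect the argument). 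Its Matlis dual therefore lives in cohomological degree $-n$ and non-negative internal $\bar{A}$-degrees, so that $\op{R}\Gamma_I(D)^*$ has bigraded support inside $\{(i,j) : i \geq l,\ j \leq 0\}$. In particular $(\op{R}\Gamma_I(D)^*)_{(\geq s,\geq s)} = 0$ for all $s \geq 1$, so $\op{R}\Gamma_I(D)^*$ is torsion in $\op{BiGr}(D)$ and $\pi$ kills it; the analogous argument handles $\op{R}\Gamma_J(D)^*$.

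Finally, applying $\pi$ to the dualized triangle collapses the middle term, leaving $\pi(\op{R}\Gamma_{m_D}(D)^*) \simeq \pi(\op{R}\Gamma_{D_{++}}(D)^*)[1]$, i.e.\ $\mathcal{R}' \simeq \mathcal{R}[1]$; hence $\mathcal{R}'[-1] \simeq \mathcal{R}$ in the bimodule derived category, and the two tensor endofunctors $- \otimes^{\mathbb{L}} \mathcal{R}$ and $- \otimes^{\mathbb{L}} \mathcal{R}'[-1]$ are naturally isomorphic. The main obstacle I anticipate is setting up the Mayer--Vietoris triangle correctly in the derived category of bigraded $D^e$-modules, so that the resulting isomorphism is functorial at the level of bimodule complexes (which is what is needed for the induced tensor endofunctors to agree naturally, not just objectwise).
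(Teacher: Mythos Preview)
Your proposal is correct and follows essentially the same route as the paper: the paper also sets $I_1,I_2$ to be the ideals generated by $m_{A/(f)}$ and $m_{B/(g)}$, uses the Mayer--Vietoris triangle for $I_1\cap I_2=D_{++}$ and $I_1+I_2=m_D$ (justified, as you anticipated, via the Artin--Rees property for ideals generated by normal elements), and then shows $H^i_{I_k}(D)^*$ is $D_{++}$-torsion via the tensor decomposition $H^i_{I_1}(D)\simeq H^i_{m_{\bar A}}(\bar A)\otimes_k \bar B$ together with $H^i_{m_{\bar A}}(\bar A)_{>0}=0$. The only cosmetic difference is that the paper obtains this tensor decomposition by passing to the central generators $x_i^{n+1}$ and a \v Cech complex, whereas you argue directly from $k$-flatness of $\bar B$; both are valid.
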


\begin{proof}
Let $I_1,I_2$ be the ideals generated by $\{x_0,\cdots,x_n \}, \{y_0\cdots,y_m\}$, respectively.
Then, we have $m_{{C/(f,g)}_{++}} = I_1 \cap I_2$, $m_{C/(f,g)} = I_1 + I_2$ and have the following long exact sequence in $\op{BiGr}(C/(f,g)^e)$
{\footnotesize
\[
 \cdots \rightarrow H^i_{m_{C/(f,g)}}(C/(f,g)) \rightarrow H^i_{I_1}(C/(f,g)) \oplus H^i_{I_2}(C/(f,g)) \rightarrow H^i_{m_{{C/(f,g)}_{++}}}(C/(f,g)) \rightarrow  \cdots
\]}
 by using the Mayer-Vietoris sequence, where $\Gamma_{I_j} (j=1,2)$ is defined not by using the degrees of $I_j$ but by using powers of $I_j$ (i.e., $\Gamma_{I_j}(M) := \{m \in M \mid mI_j^n  = 0 \text{ for some }n\}$).
Note that we can use the Mayer-Vietoris sequence in our case because $I_1, I_2$ are generated by normal elements and this implies that $I_1, I_2$ satisfy Artin-Rees property. 
We also have the exact triangle in $\op{D}(\op{BiGr}(C/(f,g)^e))$
{\small
\[
 \op{R}\Gamma_{m_{C/(f,g)}}(C/(f,g)) \rightarrow \op{R}\Gamma_{I_1}(C/(f,g))\oplus \op{R}\Gamma_{I_2}(C/(f,g)) \rightarrow \op{R}\Gamma_{m_{{C/(f,g)}_{++}}}(C/(f,g)).
\]
}
Moreover, $H^i_{I_1}(C/(f,g))^*$ and $H^i_{I_2}(C/(f,g))^*$ are torsion modules for $m_{C/(f,g)_{++}}$ from Sub-Lemma \ref{basechange}.
So, the cohomologies of $\op{R}\Gamma_{I_1}(C/(f,g))^* \oplus \op{R}\Gamma_{I_2}(C/(f,g))^*$ are torsion.
Combining this result with the above triangle, we get the claim.

\end{proof}



\begin{slem}
Let $I_1,I_2$ be as in the proof of Lemma \ref{lem1-1}.
$H^i_{I_1}(C/(f,g))^{*}$ and $H^i_{I_2}(C/(f,g))^{*}$ are torsion modules for $m_{{C/(f,g)_{++}}}$ for any $i$.
 \label{basechange}
\end{slem}

\begin{proof}
We only show that $H^i_{I_1}(C/(f,g))^{*}$ are torsion modules for $m_{C/(f,g)_{++}}$.
We can show that $H^i_{I_2}(C/(f,g))^{*}$ are torsion in the same way.

First, we prove that $H^i_{I_1}(C)^{*}$ is torsion.
We have $\Gamma_{I_1} = \Gamma_{I_1^{n+1}}$.
Moreover, if $J_1$ is the ideal generated by $x_0^{n+1},\cdots, x_n^{n+1}$, then we have $\Gamma_{I_1^{n+1}} = \Gamma_{J_1}$.
Note that $x_0^{n+1},\cdots, x_n^{n+1}$ are central elements in $C$ from the choice of $\{q_{ij}\}$.

Let $M \in \op{Gr}(C)$ be injective.
Then, we have a surjective localization map $M \rightarrow M[x_i^{-(n+1)}]$ for any $i$ and $\Gamma_{J_1}(M)$ is injective in $\op{Gr}(C)$ because $J_1$ satisfies Artin-Rees property (cf. \cite[Lemma A1.4]{eisenbud2005geometry}). 
When $M'$ is injective in $\op{Gr}(C^e)$, then $M'$ is injective in $\op{Gr}(C)$,
where $\op{Res}_{C}: \op{Gr}(C^e) \rightarrow \op{Gr}(C)$ is the restriction functor (\cite[Lemma 2.1]{yekutieli1992}).
Thus, we can calculate $\op{Res}_{C}(H^i_{J_1}(C))$ by using a $\check{\op{C}}$ech complex $\mathscr{C}(x_0^{n+1},\cdots, x_n^{n+1} ; C)$ (cf. \cite[Theorem A1.3]{eisenbud2005geometry}, \cite[Chapter 2, 3]{mckemey2012local}).
Then, we have $\mathscr{C}(x_0^{n+1},\cdots, x_n^{n+1} ; C) = \mathscr{C}(x_0^{n+1},\cdots, x_n^{n+1} ; A) \otimes_k B$. 
This induces that $\op{Res}_C({H^i_{J_1}(C)}) \simeq H^i_{m_A}(A) \otimes_k B$.
Because $H^i_{m_A}(A)_{>0} = 0$ (\cite[Proposition 2.4]{kanazawa2015}), $H^i_{I_1}(C)^{*}$ is torsion. 

Finally, we consider the exact sequences of $C$-bimodules
\begin{align}
 &0 \rightarrow C (-d_0, -d_1) \overset{\times f}{\rightarrow} C\rightarrow C/(f) \rightarrow 0,\label{234132_12Sep23} \\
 &0 \rightarrow C/(f)(-e_0, -e_1) \overset{\times g}{\rightarrow} C/(f) \rightarrow C/(f,g) \rightarrow 0.\label{234152_12Sep23}
\end{align}
Then, we take the long exact sequence for $\Gamma_{I_1}$ and 
 we get the claim since $H^i_{I_1}(C)^*$ is torsion.
\end{proof}

\begin{lemm}
 $\op{gl.dim}(\op{qbigr}(C/(f,g))) = d_0+d_1+e_0+e_1-4$.
\label{lem1-2}
\end{lemm}

\begin{proof}
We show the proposition only in (1) of the theorem.
In (2) of the theorem, the proposition can be shown in the same way (cf. Remark \ref{192351_30Aug23}).
 We consider a bigraded (commutative) algebra $D := k[s_0,\cdots,s_n ,t_0,\cdots, t_m]/(\sum_{i=0}^ns_i,\sum_{i=0}^mt_i)$ with $s_i=x_i^{n+1},t_i=y_i^{m+1}$ and the projective spectrum $\op{biProj}(D)$ in the sense of \cite[Section 1]{hyry1999}.
Then, $C/(f,g)$ is a finite $D$-module.
So, $\op{qbigr}(C/(f,g))$ can be thought of as the category of modules over a sheaf $\mathcal{A}$ of $\mathcal{O}_{\op{biProj}(D)}$-algebras, where $\mathcal{A}$ is the sheaf on $\op{biProj}(D)$ which is locally defined by the algebra $(k[x_0,\cdots,x_n, y_0,\cdots,y_m]/(f,g)_{x_iy_j})_{(0,0)}$ on each open affine scheme $\op{D}_{+}(s_it_j) \simeq \op{Spec}((D_{s_it_j})_{(0,0)})$.
Hence, it is enough to prove that the global dimension of $(k[x_0,\cdots,x_n, y_0,\cdots,y_m]/(f,g)_{x_iy_j})_{(0,0)} = d_0+d_1+e_0+e_1-4 = n+m-2$.

We can complete the rest of the proof in the same way as in \cite[Section 2.3]{kanazawa2015}. 
We give its sketch.
For simplicity, we prove the claim when $i=j=0$.
We define a $k$-algebra $E$ by
$E:=k[S_1,\cdots,S_n,T_1,\cdots,T_m]/(1+\sum_{i=1}^n S_i, 1+\sum_{i=0}^mT_i)$ with $S_i = s_i/s_0, T_i = t_i/t_0$.
We also define an $E$-algebra $F$ by 
    $F := k \langle X_1,\cdots, X_n, Y_1,\cdots, Y_m \rangle/(X_iX_j - (q_{0i}q_{ij}q_{j0}) X_jX_i, Y_iY_j- (q'_{0i}q'_{ij}q'_{j0})Y_jY_i, 1+\sum_{i=1}^n X_{i}^{n+1}, 1+\sum_{i=1}^mY_{i}^{m+1})$ with $X_i = x_i/x_0, Y_i =y_i/y_0$.
The module structure of $F$ is given by the identifications $S_i = X_i^{n+1}, T_i = Y_i^{m+1}$.
Let $F_{\tilde{m}}$ be the localization of $F$ at a maximal ideal $\tilde{m} := (S_1 - a_1, \cdots, S_n- a_n, T_1 - b_1, \cdots, T_m - b_m) $ of $E$ with $1+\sum_{i=1}^{n}a_i=1+\sum_{i=1}^mb_i=0 \ (a_i,b_i \in k)$.
Then, it is enough to prove that the global dimension of $F_{\tilde{m}}$ is $n+m-2$ (\cite[Lemma 2.6, 2.7]{kanazawa2015}).

If all $a_i,b_i$ are not $0$, then $F/\tilde{m}F$ is a twisted group ring and hence semisimple.
Moreover, $S_1 - a_1, \cdots, S_n- a_n, T_1 - b_1, \cdots, T_m - b_m$ is a regular sequence in $F_{\tilde{m}}$.
This induces the claim (\cite[Theorem 7.3.7]{mcconnell2001}).

On the other hand, we assume that one of $\{a_1, \cdots, a_n, b_1, \cdots, b_m\}$ is $0$. 
For example, we assume $a_1 = 0$.
We consider $F/(X_1)$.
Then, we can show that the global dimension of $(F/(X_1))_{\tilde{m}} = n+m-3$ because $\op{pd}_{F}(S) = \op{pd}_{F/(X_1)}(S) +1$ for any simple $F$-module $S$ with $\op{Ann}(S)= \tilde{m}$ (\cite[Theorem 7.3.5]{mcconnell2001}). 
If some other $a_i, b_j$ are $0$, we repeat taking quotients and can reduce to considering the global dimension of the algebra $k[X,Y]/(X^{n+1}+1,Y^{m+1}+1)$, which are $0$.
\end{proof}

\begin{rmk}
\label{192351_30Aug23}

To prove Lemma \ref{lem1-2} in (2) of the theorem, consider the projective spectrum $X:=\op{biProj}(k[s_0,\cdots,s_{n}, t_0,\cdots,t_{n+1}]/$ $(\sum_{i=0}^{n} s_i t_i, \sum_{i=0}^{n+1} t_i^{n+1} ))$ (resp. $\op{biProj}(k[s_0,\cdots,s_{n}, t_0,\cdots,t_n]/$ $(\sum_{i=0}^{n} s_i t_i, \sum_{i=0}^{n} t_i^{n} ))$) and the sheaf $\mathcal{A}$ of algebras on $X$ associated to $C/(f,g)$. 
\end{rmk}

\begin{proof}[Proof of Theorem \ref{thm1}]
\label{115802_31Aug23}
First, we calculate $\op{R}\Gamma_{m_{C/(f,g)}}(C/(f,g))^{*}$.
From \cite[Proposition 2.4]{kanazawa2015} (or \cite[Example 5.5]{reyes2014}) and the proof of \cite[Lemma 6.1]{reyes2014}, we have 
\begin{align*}
  \op{R}\Gamma_{m_{C}}(C)^* &\simeq \op{R}\Gamma_{m_{A}}(A)^* \otimes \op{R}\Gamma_{m_{B}}(B)^*  \\
&\simeq {}^\phi A^1 (-d_0-e_0) \otimes_k {} ^\psi B^1 (-d_1-e_1)[d_0+d_1+e_0+e_1],
\end{align*}
where $\phi$ (resp. $\psi$) is the graded automorphism of $A$ (resp. $B$) which maps $x_j \mapsto \prod_{i=0}^{n} q_{ji}x_j$ (resp. $y_j \mapsto \prod_{i=0}^m q_{ji}'y_j$).
Then, we consider the distinguished triangles 
\begin{align*}
 &\op{R}\Gamma_{m_C}(C(-d_0,-d_1)) \overset{\times f}{\longrightarrow} \op{R}\Gamma_{m_C}(C) \longrightarrow \op{R}\Gamma_{m_{C/(f)}}(C/(f)), \\
 &\op{R}\Gamma_{m_{C/(f)}}((C/(f))(-e_0,-e_1)) \overset{\times g}{\longrightarrow} \op{R}\Gamma_{m_{C/(f)}}(C/(f)) \longrightarrow \op{R}\Gamma_{m_C/(f,g)}(C/(f,g))
\end{align*}
obtained from the exact sequences \ref{234132_12Sep23} and \ref{234152_12Sep23} of $C$-bimodules.
Hence, we have 
\begin{align}
\label{formula}
 \op{R}\Gamma_{m_{C/(f,g)}}(C/(f,g))^{*} \simeq  {}^{\phi \otimes \psi} (A \otimes_k B /(f,g))^1[d_0+d_1+e_0+e_1-2].
\end{align}
In addition, we have the Serre duality in $\op{D^{b}}(\op{qbigr}(C/(f,g)))$ from Lemma \ref{dulaitylem}.
Thus, $- \otimes^{\mathbb{L}} \pi(\op{R}\Gamma_{m_{C/(f,g)_{++}}}(C/(f,g))^{*})[-1]$ is the Serre functor of $\op{D^{b}}(\op{qbigr}(C/(f,g)))$ because this functor induces an equivalence from Lemma \ref{lem1-1} and the formula \ref{formula}.
Finally, the Serre functor $- \otimes^{\mathbb{L}} \pi(\op{R}\Gamma_{m_{C/(f,g)_{++}}}(C/(f,g))^{*})[-1]$ induces the $[d_0+d_1+e_0+e_1-4]$-shift functor if and only if $\prod_{i=0}^n q_{ij}$ and $\prod_{i=0}^m q'_{ij}$ are independent of $j$ (cf. \cite[Remark 2.5]{kanazawa2015}). This completes the proof.
 
\end{proof}

The following lemma is well-known in the case of $\mathbb{N}$-graded algebras (for example, see \cite{de2004ideal}, \cite{yekutieli1997serre}).

\begin{lemm}[Local Duality and Serre Duality for $\mathbb{N}^2$-graded algebras]
\label{duality}
 Let $D$ be a connected right noetherian $\mathbb{N}^2$-graded $k$-algebra (connected means $D_{0,0}=k$).
 Let $E$ be a connected $\mathbb{N}^2$-graded $k$-algebra.
We assume that $\Gamma_{m_{D_{++}}}$ has finite cohomological dimension.
\begin{enumerate}
\item Let $Q := \omega \circ \pi: \op{BiGr}(D) \rightarrow \op{BiGr}(D)$.
 Let $M \in \op{D}(\op{BiGr}(D \otimes_k E^{\circ}))$.
Then,
\begin{align}
 \op{R}\Gamma_{m_{D_{++}}}(M)^* &\simeq  \op{R}{\op{Hom}}_{D}(M, \op{R}\Gamma_{m_{D_{++}}}(D)^*), \label{ld1} \tag{a} \\ 
     \op{R}Q(M)^* &\simeq  \op{R}{\op{Hom}}_{D}(M, \op{R}Q(D)^*) \label{ld2} \tag{b}
\end{align}
in $\op{D}(\op{BiGr}(D \otimes_k E^{\circ}))$, where we denote the natural extension of $Q$ to a functor between $\op{BiGr}(D \otimes_k E^{\circ})$ and itself by the same notation.
\item  We assume that $\op{qbigr}(D)$ has finite global dimension.
Let $\mathcal{M} := \pi(M)$, $\mathcal{N} := \pi(N)$ $(M,N \in \op{D^b}(\op{bigr}(D)))$.
Suppose ${\mathcal{R}}_D := \pi(\op{R}\Gamma_{m_{D_{++}}}(D)^*) \in \op{D^b}(\op{qbigr}(D))$. 
Then, $\mathcal{N} \otimes^\mathbb{L} {\mathcal{R}}_D \in \op{D^b}(\op{qbigr}(D))$ and 
\[
 \op{Hom}_{\op{D^b}(\op{qbigr}(D))}(\mathcal{N},\mathcal{M}) \simeq \op{Hom}_{\op{D^b}(\op{qbigr}(D))}(\mathcal{M}, (\mathcal{N} \otimes^\mathbb{L} {\mathcal{R}}_D)[-1])',
\]
which is functorial in $\mathcal{M}$ and $\mathcal{N}$.
Here, $(-)'$ denotes the $k$-dual. 
\end{enumerate}
\label{dulaitylem}
\end{lemm}

\begin{proof}
Since $\op{R}^i\Gamma_{m_{D_{++}}}(-) \simeq \lim_{n \to \infty} \op{Ext}^i(D/D_{\geq n, \geq n},-)$ and $D$ is right noetherian, one can check that $\op{R}^i\Gamma_{m_{D_{++}}}(-)$ commutes with direct limits as in  \cite[Proposition 16.3.19]{yekutieli2019derived}.
In addition, if $K$ is a complex of graded free right $D$-modules and $L$ is a complex of graded right $D^e$-modules, then $\Gamma_{m_{D_{++}}}(K \otimes_D L) \simeq K \otimes_D \Gamma_{m_{D_{++}}}(L)$ (cf. \cite[Lemma 6.10]{mori2021}).
So, we can apply the argument of \cite[Theorem 5.1]{vandenbergh1997} (or \cite[Theorem 2.1]{mori2024corrigendum}) to prove (\ref{ld1}) of (1).

In order to prove (\ref{ld2}) of (1), note that we have the canonical exact sequence and the isomorphism (see also \cite[Lemma 4.1.4, 4.1.5]{bondal2002generators})
\begin{gather*}
 0 \rightarrow \Gamma_{m_{D_{++}}}(M) \rightarrow M \rightarrow Q(M) \rightarrow \lim_{n \to \infty} \op{Ext}^1(D/D_{\geq n, \geq n}, M) \rightarrow 0 , \\
 \op{R}^iQ(M) \simeq \op{R}^{i+1}\Gamma_{m_{D_{++}}}(M), \quad (1 \leq i, M \in \op{BiGr}(D)).
\end{gather*}
So, from the previous paragraph, $Q$ 
 has finite cohomological dimension, $\op{R}^iQ$ commutes with direct limits.
 We also have $Q(K \otimes_D L) \simeq K \otimes_D Q(L)$, where $K,L$ are as above (cf. \cite[Lemma 3.28]{mori2023categorical}).
 Hence, we can also apply the argument of \cite[Theorem 5.1] {vandenbergh1997} (or \cite[Theorem 3.29]{mori2023categorical}) to prove (\ref{ld2}) of (1).

We can prove (2) in the same way as in \cite[Lemma A.1, Theorem A.4]{de2004ideal} by using (\ref{ld2}) of (1).
Note that we have a natural equivalence $\op{D^b}(\op{qbigr}(D)) \simeq \op{D^b_f}(\op{QBiGr}(D))$, where $\op{D^b_f}(\op{QBiGr}(D))$ is the full subcategory of $\op{D^b}(\op{QBiGr}(D))$ consisting of complexes with cohomology in $\op{qbigr}(D)$ (\cite[Lemma 2.2]{de2004ideal}).

\end{proof}

As a corollary of Theorem \ref{thm1}, we construct examples of noncommutative projective Calabi-Yau schemes by using Segre products.
Let $A,B,f$ and $g$ be as in Theorem $\ref{thm1}$.
\begin{dfn}
\label{sgr-pd}
\begin{enumerate}
 \item  The Segre product $A \circ B$ of $A$ and $B$ is the $\mathbb{N}$-graded $k$-algebra with $(A \circ B)_i = A_i \otimes_k B_i$.
\item Let $M \in \op{bigr}(C)$ .
We define a right graded $A \circ B$-module $M_\Delta$ as the graded $A \circ B$-module with $(M_{\Delta})_i = M_{i,i}$.
\end{enumerate}
\end{dfn}

\begin{lemm}[{\cite[Theorem 2.4]{rompay1996483}}]
We have the following natural isomorphism
\[
\xymatrix@R=5pt{
   \op{qbigr}(C) \ar[r] &  \op{qgr}(A \circ B), & 
 \pi(M) \ar@{|->}[r] & \pi(M_\Delta).
}
\]
 In addition, the functor defined by $- \otimes_{A \circ B}C$ is the inverse of this equivalence.
\label{lemrompay}
\end{lemm}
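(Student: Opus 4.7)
The plan is to promote $(-)_\Delta$ and $-\otimes_{A\circ B}C$ to an adjoint pair and verify that both functors preserve torsion, and that the unit and counit become isomorphisms in the respective quotient categories. First I would set up the adjunction: by the standard Hom--tensor formula,
\[
\op{Hom}_{C}(N\otimes_{A\circ B} C, M) \simeq \op{Hom}_{A\circ B}(N, M_\Delta),
\]
since restricting a bigraded $C$-module $M$ along $A\circ B\hookrightarrow C$ picks out exactly the diagonal in internal degree $i$. Thus $G := -\otimes_{A\circ B}C$ is left adjoint to $F := (-)_\Delta$, and $F$ is exact (it is a direct summand of the underlying bigrading).

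Next I would check that both functors preserve torsion. For $F$ this is immediate: $M_{(\geq s,\geq s)}=0$ implies $(M_\Delta)_{\geq s}=0$. For $G$, if $N_{\geq s_0}=0$ and $\min(i,j)\geq s_0$, then the factorization
\[
C_{i-k,j-k}=(A\circ B)_{s_0-k}\cdot C_{i-s_0,j-s_0}
\]
(which holds because $A$ and $B$ are generated in degree $1$, so $A_p A_{p'}=A_{p+p'}$ and similarly for $B$), together with $N_k\cdot(A\circ B)_{s_0-k}\subseteq N_{s_0}=0$, forces every tensor $n\otimes c$ in $G(N)_{i,j}$ to vanish. So both functors descend to $\bar F$ and $\bar G$ on the quotient categories. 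The unit $\eta_N\colon N\to F(G(N))$ is in fact an honest isomorphism before passing to the quotient, as one sees by noting that $(N\otimes_{A\circ B}C)_{i,i}$ is spanned by $n\otimes c$ with $c\in C_{i-k,i-k}=(A\circ B)_{i-k}$ and collapsing across the tensor relation to $N\otimes_{A\circ B}(A\circ B)=N$.

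The one genuinely nontrivial step, and where I expect the main obstacle to lie, is a sublemma: \emph{any finitely generated $M\in\op{bigr}(C)$ with $M_\Delta=0$ is torsion.} I would prove it by taking bihomogeneous generators $m_1,\ldots,m_r$ of bidegrees $(a_s,b_s)$, setting $S:=\max_{s}\max\{a_s,b_s\}$, and using the same degree-one factorization to write, for $i,j\geq S$ and $k:=\min(i,j)$,
\[
m_s\cdot C_{i-a_s,j-b_s} = (m_s\cdot C_{k-a_s,k-b_s})\cdot C_{i-k,j-k}\subseteq M_{k,k}\cdot C_{i-k,j-k}=0.
\]
Thus $M_{(\geq S,\geq S)}=0$. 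This is the only place the algebraic structure of $C$ enters decisively; the rest is categorical bookkeeping.

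With the sublemma in hand, I would finish by applying the exact functor $F$ to the counit $\epsilon_M\colon GF(M)\to M$. The triangle identity $F(\epsilon_M)\circ\eta_{F(M)}=\op{id}_{F(M)}$, combined with $\eta$ being an isomorphism, shows that $F(\epsilon_M)$ is iso, and hence $F(\ker(\epsilon_M))=0$ and $F(\op{coker}(\epsilon_M))=0$; the sublemma then makes both $\ker(\epsilon_M)$ and $\op{coker}(\epsilon_M)$ torsion, so $\bar\epsilon$ is an isomorphism in $\op{qbigr}(C)$, completing the equivalence.
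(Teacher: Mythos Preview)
The paper does not supply a proof of this lemma; it is stated with a citation to Van Rompay and used as a black box. So there is no in-paper argument to compare against. Your proposal is a correct and self-contained reconstruction of the result, following what is presumably the standard approach: set up the extension/restriction adjunction, verify torsion-preservation on both sides, observe that the unit is already an isomorphism, and use the key sublemma (``$M_\Delta=0$ implies $M$ torsion'') together with exactness of $(-)_\Delta$ to force the counit to become an isomorphism in the quotient.

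Two small points worth tightening. First, you should record that $(-)_\Delta$ preserves finite generation, so that the functor genuinely lands in $\op{qgr}(A\circ B)$ rather than $\op{QGr}(A\circ B)$; this follows from the same degree-one factorization you invoke elsewhere, after reducing to the case $M=C(a,b)$ via a finite presentation. Second, when you invoke the sublemma for $\ker(\epsilon_M)$ and $\op{coker}(\epsilon_M)$, you are implicitly using that these are finitely generated; this uses Noetherianity of $C$ (for the kernel) and finite generation of $M_\Delta\otimes_{A\circ B}C$ (for the cokernel), both of which are fine here but deserve a word. With these recorded, the argument is complete.
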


\begin{rmk}
Let $J := (f,g) \in \op{bigr}(C)$.
We similarly obtain an equivalence
\[
   \op{qbigr}(C/J) \simeq  \op{qgr}(A \circ B/J_\Delta).
\]\label{rmkrompay}
\end{rmk}
\vspace{-1cm}
Combining Theorem \ref{thm1} with Remark \ref{rmkrompay}, we get the following.
\begin{cor}
 Let $J := (f,g) \in \op{bigr}(C)$.
Then, $\op{proj}(A \circ B/ J_\Delta)$ is a projective Calabi-Yau scheme.
\label{cor1}
\end{cor}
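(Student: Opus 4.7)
The plan is to deduce the corollary directly from the equivalence of categories in Remark \ref{rmkrompay} together with Theorem \ref{thm1} (and its variant Theorem \ref{thm1-2}). The point is that being a projective Calabi-Yau scheme is an entirely categorical condition: it is a statement about the global dimension of $\op{qgr}$ and about the Serre functor on its bounded derived category being a shift. Thus, once one has an equivalence between $\op{qbigr}(C/J)$ and $\op{qgr}(A\circ B/J_\Delta)$, the Calabi-Yau property must transfer.

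First I would invoke Remark \ref{rmkrompay} to obtain the equivalence $\op{qbigr}(C/J) \simeq \op{qgr}(A\circ B/J_\Delta)$, which is a consequence of Lemma \ref{lemrompay} applied with the bigraded ideal $J = (f,g)$. This equivalence is a $k$-linear abelian equivalence, hence induces an equivalence of the bounded derived categories $\op{D^b}(\op{qbigr}(C/J)) \simeq \op{D^b}(\op{qgr}(A\circ B/J_\Delta))$ that commutes with the shift functors. In particular, the global dimensions of the two categories agree, and one is the Serre functor on one side if and only if its transport is the Serre functor on the other side.

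Next I would use Theorem \ref{thm1} (respectively Theorem \ref{thm1-2}), which under the given numerical hypotheses on $\{q_{ij}\}, \{q'_{ij}\}$ shows that $\op{proj}(C/(f,g))$ is a projective Calabi-Yau $(n+m-2)$-scheme (respectively $(2n-1)$- or $(2n-2)$-scheme); namely, $\op{qbigr}(C/J)$ has the required finite global dimension and its Serre functor coincides with the appropriate shift. Transporting these two properties across the equivalence yields that $\op{qgr}(A\circ B/J_\Delta)$ has the same global dimension and that the shift functor is its Serre functor, which is precisely the definition of a projective Calabi-Yau scheme.

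I do not expect any serious obstacle here, since all the real work has been done in Theorem \ref{thm1} and in Lemma \ref{lemrompay}; the only point to be careful about is that the equivalence of Remark \ref{rmkrompay} is compatible with the structure objects $\mathcal{A} = \pi(C/J)$ on one side and $\pi(A\circ B/J_\Delta)$ on the other, which follows from the explicit form $\pi(M) \mapsto \pi(M_\Delta)$ sending the regular bimodule to the regular module. Once that is noted, the corollary is immediate.
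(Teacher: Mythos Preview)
Your proposal is correct and is exactly the argument the paper intends: the paper gives no separate proof of the corollary beyond the sentence ``Combining Theorem \ref{thm1} with Remark \ref{rmkrompay}, we get the following,'' and your write-up simply unpacks that sentence. Your extra care in noting that the Calabi-Yau condition is purely categorical (global dimension of $\op{qgr}$ and Serre functor $=[n]$) and that the equivalence $\pi(M)\mapsto\pi(M_\Delta)$ carries the structure object to the structure object is a welcome clarification, but it does not deviate from the paper's approach.
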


\subsection{Weighted hypersurfaces}
\label{122542_31Aug23}
Reid produced the list of all commutative weighted Calabi-Yau hypersurfaces of dimensions 2 (for example, see  \cite{iano2000}, \cite{reid1980}).
In this section, we construct noncommutative projective Calabi-Yau schemes from noncommutative weighted projective hypersurfaces.
Let $A$ be a right noetherian $\mathbb{N}$-graded $k$-algebra.
Then, the $r$-th Veronese algebra $A^{(r)}$ is the $\mathbb{N}$-graded $k$-algebra with $A^{(r)}_i = A_{ri}$.
We consider the (commutative) weighted polynomial ring $A = k[x_0,\cdots,x_n]$ with $\op{deg}(x_i) = d_i$.
Then, $\op{Coh}(\op{Proj}(A))$ is in general not equivalent to $\op{qgr}(A)$, but to $\op{qgr}(A^{(n+1)\op{lcm}(d_0,\cdots,d_n)})$.
However, we can think of $\op{qgr}(A)$ as a resolution of singularities of $\op{Coh}(\op{Proj}(A))$ (cf. \cite[Example 4.9]{smith2004maps}).
Moreover, we have $\op{qgr}(A) \simeq \op{Coh}([(\op{Spec}(A) \backslash \{ 0\}) / \mathbb{G}_m])$ and $[(\op{Spec}(A) \backslash \{ 0\}) / \mathbb{G}_m]$ is a smooth Deligne-Mumford stack whose coarse moduli space is $\op{Proj}(A)$.

\begin{thm}
 Let $(d_0,\cdots, d_n) \in \mathbb{Z}_{>0}^{n+1}$ and $d := \sum_{i=0}^n d_i$ such that $d$ is divisible by $d_i$ for all $i$.
Let $C:= k\la x_0,\cdots,x_n \ra / (x_jx_i-q_{ji}x_ix_j)_{i,j}$, where $q_{ji} \in k^\times, \op{deg}(x_i) = d_i$ for all $i,j$.  
Let $f := \sum_{i=0}^n x_i^{h_i}$, where $h_i := d/d_i$.

We assume that $q_{ii} =q_{ij}q_{ji} = q_{ij}^{h_i} = q_{ij}^{h_j} =1$ for all $i,j$.
Then, $\op{proj}(C/(f))$ is a projective Calabi-Yau $(n-1)$ scheme if and only if there exists $c \in k$ such that $c^{d_j} = \prod_{i=0}^{n}q_{ij}$ for all $j$.
\label{thm2}
\end{thm}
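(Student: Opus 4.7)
The plan is to follow the pattern of Theorem \ref{thm1}, now simplified to a single algebra and a single hypersurface equation, but with the weighted grading demanding extra care at the global-dimension step and at the final Calabi-Yau comparison. Centrality of $f$ is immediate from the hypotheses: $q_{ij}^{h_i}=1$ forces $x_i^{h_i}x_j=q_{ij}^{h_i}x_jx_i^{h_i}=x_jx_i^{h_i}$, and the symmetric identity follows from $q_{ij}^{h_j}=1$. Thus $f$ is a central nonzerodivisor of degree $d$, giving the short exact sequence
\[
0\to C(-d)\xrightarrow{\times f}C\to C/(f)\to 0.
\]

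The next step is to compute the balanced dualizing complex. By \cite[Proposition 2.4]{kanazawa2015} and \cite[Example 5.5, Lemma 6.1]{reyes2022graded}, applied to the weighted quantum polynomial ring $C$, which is AS-regular of dimension $n+1$ with Gorenstein parameter $d=\sum a_i$,
\[
\op{R}\Gamma_{m_C}(C)^{*}\simeq {}^{\phi}C^{1}(-d)[n+1],
\]
where $\phi$ is the graded automorphism sending $x_j\mapsto(\prod_{i=0}^n q_{ji})x_j$. Applying $\op{R}\Gamma_{m_C}$ to the exact sequence above and dualizing then yields
\[
\op{R}\Gamma_{m_{C/(f)}}(C/(f))^{*}\simeq {}^{\phi}(C/(f))^{1}[n].
\]

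For the global dimension I would adapt Lemma \ref{lem1-2}: view $\op{qgr}(C/(f))$ as the category of modules over the sheaf of algebras associated to $C/(f)$ on $\op{Proj}(k[s_0,\ldots,s_n]/(\sum s_i^{h_i}))$, or equivalently on the associated smooth Deligne-Mumford stack $[(\op{Spec}(C/(f))\setminus\{0\})/\mathbb{G}_m]$, and reduce to showing that each localization $(C[x_i^{-1}]/(f))_0$ has global dimension $n-1$ at every relevant maximal ideal. This follows from a regular-sequence computation in the spirit of \cite[Lemmas 2.6, 2.7]{kanazawa2015}, with the degenerate cases reducing to the semisimple algebras $k[X]/(X^h+1)$. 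For the Serre duality, Lemma \ref{dulaitylem} carries over verbatim: $(C/(f))_{>0}$ is generated by the normal elements $x_i$ and so has the Artin-Rees property, so the Vyas-Yekutieli criterion \cite[Theorem 0.4]{vyas2018} applies. Combining these, the Serre functor on $\op{D^b}(\op{qgr}(C/(f)))$ is $-\otimes^{\mathbb{L}}\pi({}^{\phi}(C/(f))^{1})[n-1]$.

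Finally, $\op{proj}(C/(f))$ is Calabi-Yau $(n-1)$ if and only if $-\otimes^{\mathbb{L}}\pi({}^{\phi}(C/(f))^{1})\simeq\op{id}$ on $\op{qgr}(C/(f))$. For any $c\in k^{\times}$ the weighted scalar automorphism $\phi_{c}(x_{j})=c^{a_{j}}x_{j}$ is the restriction of the grading $\mathbb{G}_{m}$-action, and hence acts as the identity on $\op{qgr}$ (cf.\ \cite[Remark 2.5]{kanazawa2015}); so the existence of $c$ with $c^{a_{j}}=\prod_{i}q_{ji}$ gives $\phi=\phi_c$ and forces the Calabi-Yau property. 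Conversely, triviality of the Serre twist in $\op{qgr}$ forces $\phi$ to agree with such a $\phi_c$, whence $c^{a_j}=\prod_i q_{ji}$ for all $j$. I expect this last step to be the main obstacle: in the unweighted setting of Theorem \ref{thm1} the analogous condition collapses to $\prod q_{ij}$ being independent of $j$, whereas here the differing weights $a_j$ require a more careful accounting of the $k^{\times}$-action on the graded strands in order to rule out non-scalar graded automorphisms that happen to act trivially on $\op{qgr}$.
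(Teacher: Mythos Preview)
Your overall architecture matches the paper's: centrality of $f$, the dualizing complex ${}^{\phi}(C/(f))^{1}[n]$ via AS-regularity of $C$, Serre duality from \cite[Theorem A.4]{de2004ideal}, and the final comparison of $\phi$ with the weighted $\mathbb{G}_m$-action. The paper also computes $\phi$ a bit more explicitly than you do, by realising $C$ as a $\mathbb{Z}^{n+1}$-graded twist of the commutative weighted polynomial ring and invoking \cite[Lemma 5.3, Theorem 5.4]{reyes2022graded}, but your sketch is adequate there.

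The genuine gap is in the global-dimension step. Your proposed reduction to $(C[x_i^{-1}]/(f))_{0}$ only works when $a_i=1$: if $a_i>1$ the localisation $C[x_i^{-1}]$ is \emph{not} strongly $\mathbb{Z}$-graded, so $\op{Gr}(C[x_i^{-1}])$ is not equivalent to modules over its degree-$0$ part, and computing $\op{gl.dim}$ of the latter tells you nothing about the local piece of $\op{qgr}(C/(f))$. This is precisely the obstruction that the weighted grading introduces, and the paper deals with it by passing first to the quasi-Veronese algebra $(C/(f))^{[a]}$ with $a=\max_i a_i$ (which \emph{is} generated in degree $1$), so that $\op{qgr}(C/(f))\simeq\op{qgr}((C/(f))^{[a]})$ becomes coherent modules over a sheaf of \emph{tiled matrix algebras} $N_i$ on $\op{Proj}(k[s_0,\dots,s_n]/(\sum s_i))$. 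A further Morita reduction replaces $N_i$ by an $a_i\times a_i$ matrix algebra $M_i$, and only then does the regular-sequence argument run: the semisimple base case is not $k[X]/(X^h+1)$ but rather a matrix algebra Morita-equivalent to graded modules over a skew Laurent ring $R[t,t^{-1};\sigma]$ with $R$ semisimple, and the inductive step when some $a_l=0$ quotients by a matrix-valued normal element $J$ rather than a scalar one. Your appeal to the Deligne--Mumford stack is suggestive but does not substitute for this, since $C/(f)$ is noncommutative and there is no direct stack whose coherent sheaves recover $\op{qgr}(C/(f))$; one still has to produce the sheaf-of-algebras model, and that is exactly what the quasi-Veronese machinery does.
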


\begin{rmk}
\begin{itemize}
 \item $f$ is a central element in $C$ from the choice of $\{q_{ij}\}$.
\item Theorem \ref{thm2} is a generalization of \cite[Theorem 1.1]{kanazawa2015}.
\end{itemize}

\end{rmk}


\begin{lemm}
 The balanced dualizing complex of $C/(f)$ is isomorphic to ${^\phi}(C/(f))^1[n]$, where $\phi$ is a graded automorphism of $C$ which maps $x_j \mapsto \prod_{i=0}^n q_{ji}x_j$.
\label{lem3}
\end{lemm}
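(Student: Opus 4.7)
The strategy is to first compute the balanced dualizing complex of the ambient algebra $C$ and then descend to $C/(f)$ via the short exact sequence of $C$-bimodules
\[
 0 \longrightarrow C(-d) \overset{\times f}{\longrightarrow} C \longrightarrow C/(f) \longrightarrow 0,
\]
which is well-defined because the hypothesis $q_{ij}^{h_i}=1$ ensures $x_j x_i^{h_i} = q_{ji}^{h_i} x_i^{h_i} x_j = x_i^{h_i} x_j$, so each $x_i^{h_i}$, and therefore $f$, is central in $C$.

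For $C$ itself, the central elements $x_0^{h_0}, \ldots, x_n^{h_n}$ form a regular sequence whose generated ideal has the same radical as $m_C$, so exactly as in Sub-Lemma \ref{basechange} the local cohomology $\op{R}\Gamma_{m_C}(C)$ is computed by the $\check{\op{C}}$ech complex $\mathscr{C}(x_0^{h_0}, \ldots, x_n^{h_n}; C)$ and is concentrated in cohomological degree $n+1$. A direct inspection of the bimodule structure on the top $\check{\op{C}}$ech cohomology yields the weighted analog of \cite[Proposition 2.4]{kanazawa2015}, namely
\[
 \op{R}\Gamma_{m_C}(C)^* \simeq {}^{\phi} C^{1}(-d)[n+1],
\]
where the Gorenstein parameter $-d = -\sum_i a_i$ replaces $-(n+1)$ in the equal-weight case, and the twist $\phi(x_j) = \prod_i q_{ij}\, x_j$ arises from the commutation relations needed to transport the natural left action on the top $\check{\op{C}}$ech cohomology into the right action on its Matlis dual.

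Next I apply $\op{R}\Gamma_{m_C}$ to the above short exact sequence. Since $\op{R}\Gamma_{m_C}(C/(f)) \simeq \op{R}\Gamma_{m_{C/(f)}}(C/(f))$ and $\op{R}\Gamma_{m_C}(C)$ is concentrated in degree $n+1$, the associated long exact sequence collapses to
\[
 0 \to H^n_{m_{C/(f)}}(C/(f)) \to H^{n+1}_{m_C}(C(-d)) \overset{\times f}{\longrightarrow} H^{n+1}_{m_C}(C) \to H^{n+1}_{m_{C/(f)}}(C/(f)) \to 0
\]
together with vanishing in all lower degrees. Matlis dualizing and inserting the identification from the previous step, the cokernel computation becomes that of the map $\times f \colon {}^{\phi} C^{1}(-d) \to {}^{\phi} C^{1}$. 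Because $\phi(x_i^{h_i}) = (\prod_j q_{ij})^{h_i} x_i^{h_i} = x_i^{h_i}$ by $q_{ij}^{h_i}=1$, we have $\phi(f) = f$, so $\phi$ descends to an automorphism of $C/(f)$ and the cokernel is canonically ${}^{\phi}(C/(f))^{1}$. Hence $\op{R}\Gamma_{m_{C/(f)}}(C/(f))^* \simeq {}^{\phi}(C/(f))^{1}[n]$; by the uniqueness of balanced dualizing complexes over Noetherian connected graded algebras satisfying the $\chi$-condition (both of which $C/(f)$ inherits from $C$), this completes the identification.

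The principal technical obstacle is the careful bookkeeping of the $C$-bimodule structure in step two: one must check that the natural left action on the top $\check{\op{C}}$ech cohomology of $C$, transported under Matlis duality, produces precisely the right action twisted by $\phi$, and that the connecting map in the long exact sequence is indeed the multiplication-by-$f$ map on the dual side. These points are essentially direct, amounting to the weighted generalization of the argument already invoked for the unweighted case in the proof of Theorem \ref{thm1}, and the weighting enters only in replacing $-(n+1)$ by $-d$ in the shift.
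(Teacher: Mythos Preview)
Your argument is correct and follows the same two-step architecture as the paper (compute the balanced dualizing complex of $C$, then descend along the regular central element $f$), but you identify the Nakayama automorphism $\phi$ of $C$ by a different route. The paper does not compute $\phi$ via the \v{C}ech complex; instead it first invokes the general fact that an Artin--Schelter regular algebra is skew Calabi--Yau, so that the balanced dualizing complex of $C$ is a priori of the form ${}^{\phi}C^{1}(-d)[n+1]$ for \emph{some} Nakayama automorphism $\phi$, and then pins down $\phi$ by regarding $C$ as a $\mathbb{Z}^{n+1}$-graded twist (in the sense of \cite{zhang1996twisted}) of the commutative weighted polynomial ring and applying the transfer formula \cite[Lemma~5.3, Theorem~5.4]{reyes2022graded}. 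Your approach trades this structural machinery for the explicit bimodule bookkeeping you flag at the end; it is more elementary and keeps everything inside the weighted generalisation of \cite[Proposition~2.4]{kanazawa2015}, whereas the paper's route is cleaner once the twist formalism is available and makes transparent why the weights enter only through the Gorenstein parameter $d$. The descent step via the short exact sequence is handled identically in both.
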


\begin{proof}
Since $C$ is Artin-Schelter regular, $C$ is skew Calabi-Yau (\cite[Lemma 1.2]{reyes2014}).
This induces that the balanced dualizing complex of $C$ is isomorphic to ${}^\phi C^1(-d)[n+1]$, where $\phi$ is the Nakayama automorphism of $C$.
From \cite[Example 5.5]{reyes2014}, the automorphism $\phi$ is the map which maps $x_j \mapsto \prod_{i=0}^n q_{ji}x_j$.

By using this result, we can obtain the claim in the same way as in the proof of Theorem \ref{thm1} after Remark \ref{192351_30Aug23}.
\end{proof}

To calculate the global dimension of $\op{qgr}(C/(f))$, we recall the notion of quasi-Veronese algebras.
In detail, see \cite[Section 3]{mori2013b}.
\begin{dfn}[{\cite[Section 3]{mori2013b}}]

Let $A$ be an $\mathbb{N}$-graded $k$-algebra.
The $l$-th quasi-Veronese algebra $A^{[l]}$ of $A$ is a graded $k$-algebra defined by 
\[
 A^{[l]} :=\bigoplus_{i \in \mathbb{N}} A^{[l]}_i  := \bigoplus_{i \in \mathbb{N}} 
\begin{pmatrix}
 A_{li} & A_{li+1} & \cdots & A_{li+l-1} \\
A_{li-1} & A_{li} & \cdots & A_{li+l-2} \\
\vdots & \vdots & \ddots & \vdots \\
A_{li-l+1} & A_{li-l+2} & \cdots & A_{li}.
\end{pmatrix}.
\]
 
\end{dfn}

\begin{rmk}
\label{125114_1Sep23}
\begin{enumerate}
    \item We have $\op{Gr}(A) \simeq \op{Gr} (A^{[l]})$ (\cite[Lemma 3.9]{mori2013b}).
The equivalence is obtained by the functor $Q:\op{Gr}(A) \rightarrow \op{Gr}(A)$, which is defined by $Q(M) := \bigoplus_{i \in \mathbb{Z}} \left( \bigoplus_{j=0}^{l-1} M_{li-j} \right)$
\item When $A$ is right noetherian, $A^{[l]} \simeq \bigoplus_{0 \leq i,j \leq n-1} A(j-i)^{(l)} \in \op{gr}(A^{(l)})$, where $A^{(l)}$ is the $l$-th Veronese algebra of $A$ and the $A^{(l)}$-module structure of $A^{[l]}$ is given by the natural inclusion $A^{(l)} \subset A^{[l]}$ (cf. the proof of \cite[Proposition 4.11]{mori2015regular}). 
Then, $A^{[l]}$ is also right noetherian since $A^{(l)}$ is right noetherian.
In this case, $Q$ induces an equivalence between $\op{qgr}(A)$ and $\op{qgr}(A^{[l]})$. 
 
\end{enumerate}

\end{rmk}

\begin{lemm}
\label{190844_31Aug23}
 Let $A$ be an $\mathbb{N}$-graded $k$-algebra which is generated by homogeneous elements $y_0, \cdots, y_h$ with $\op{deg}(y_i) >0 $ as an $A_0$-algebra.
Let $l \geq \op{max}\{\op{deg}(y_0), \cdots, \op{deg}(y_h)\}$.
Then, $A^{[l]}$ is generated in degree $0$ and $1$.
\end{lemm}

\begin{proof}
For any $i \in \mathbb{N}$ and any $a,b \in \{0,1, \cdots, l-1 \}$, it is enough to show that every homogeneous element $m$ of the form
{\small
\[
m=
 \begin{pmatrix} 
  m_{0,0} & \dots & m_{0,\beta} & \dots & m_{0,l-1} \\
  \vdots &      & \vdots &       & \vdots \\
  m_{\alpha,0} & \dots & m_{\alpha,\beta} & \dots & m_{\alpha,l-1} \\
 \vdots &       & \vdots &     & \vdots \\
  m_{l-1,0} & \dots & m_{l-1,\beta} & \dots & m_{l-1,l-1}
\end{pmatrix} \in  A^{[l]}_i, \quad
\left( \begin{gathered}
	m_{\alpha, \beta} \in  \left( A^{[l]}_i \right)_{\alpha,\beta}:= A_{li+\beta-\alpha}, \\
        \  m_{\alpha, \beta} = 0 \text{ when } (\alpha,\beta) \neq (a,b) \\
        0 \leq \alpha, \beta \leq l-1
       \end{gathered}
\right)
\]
}
is generated in degree $0$ and $1$.
Moreover, we can assume that $m_{a,b} = \prod_{j=0}^{n_1} y_{i_j} \ (i_j \in \{0,\cdots,h\}, n_1 \in \mathbb{N})$.

If $m_{a,b}$ is decomposed into $\prod_{j=0}^{n_1} y_{i_j} = \prod_{j=0}^{n_2} y_{i_j} \prod_{j=n_2+1}^{n_1} y_{i_j} \ (n_2 \in \mathbb{N})$ such that $l-a \leq  \op{deg}(\prod_{j=1}^{n_2} y_{i_j}) \leq 2l-a-1$, then we have $\prod_{j=0}^{n_2} y_{i_j} \in  (A^{[l]}_1)_{a,c}=  A_{l+c-a}$ and $\prod_{j=n_2+1}^{n_1} y_{i_j} \in  (A^{[l]}_{i-1})_{c,b}=  A_{l(i-1)+b-c}$ $(0 \leq {}^\exists c \leq l-1)$.
In this case, we can show the claim by using induction on the degree of $m$.
So, it is sufficient to show that we have such a decomposition for all $m$.
Indeed, we can find at least one such decomposition from $(2l-a-1)-(l-a)+1 = l$ and the choice of $l$. In detail, we have $l-a \leq \op{deg}(y_{i_0}) \leq 2l-a-1$ or there exists $n_3 \in \mathbb{N}$ such that $\op{deg}(y_{i_0}y_{i_1} \cdots y_{i_{n_3}}) < l-a$ and $l-a \leq \op{deg}(y_{i_0}y_{i_1} \cdots y_{i_{n_3}}y_{i_{n_3}+1}) \leq 2l-a-1$ since $0 < \op{deg}(y_i) \leq l$.
\end{proof}

\begin{lemm}
 $\op{gl.dim}(\op{qgr}(C/(f))) = n-1$.
\label{lem4}
\end{lemm}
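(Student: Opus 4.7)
The plan is to follow the strategy of Lemma \ref{lem1-2}, realizing $\op{qgr}(C/(f))$ locally as modules over a sheaf of algebras on a commutative affine cover and reducing the global dimension question to a local computation. The essential new ingredient, compared to the unweighted case, is to first replace $C/(f)$ by its $l$-th quasi-Veronese algebra for $l := \op{lcm}(a_0,\ldots,a_n)$; by \cite[Lemma 3.9]{mori2013b}, $\op{qgr}(C/(f)) \simeq \op{qgr}((C/(f))^{[l]})$, and $(C/(f))^{[l]}$ is generated in degree $\leq 1$ (since $C/(f)$ is generated in degrees $\leq l$), so the biProj-type geometric language used in Lemma \ref{lem1-2} becomes available.

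Using that each $x_i^{h_i}$ is central in $C/(f)$ (by $q_{ii} = q_{ij}q_{ji} = q_{ij}^{h_i} = 1$), we view $\op{qgr}(C/(f))$ as the category of coherent modules over a sheaf $\mathcal{A}$ of algebras on the commutative scheme $\op{Proj}(k[s_0,\ldots,s_n]/(\sum_i s_i))$, where $s_i$ corresponds to $x_i^{h_i}$ (of degree $d$); the stalk of $\mathcal{A}$ on the affine chart $\op{D}_+(s_i)$ is essentially the degree-zero part of the localization of $C/(f)$ at $x_i^{h_i}$. It thus suffices to show that each such local algebra has global dimension $n-1$. On each chart, following \cite[Lemmas 2.6, 2.7]{kanazawa2015}, we reduce to computing global dimensions at maximal ideals: either the residue ring is a twisted group ring, hence semisimple, and a regular-sequence argument via \cite[Theorem 7.3.7]{mcconnell2001} yields the dimension $n-1$; or one successively kills coordinates $X_i = 0$ using \cite[Theorem 7.3.5]{mcconnell2001}, reducing eventually to commutative algebras of the form $k[X]/(X^{h}+1)$ of global dimension $0$.

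The main technical point is identifying the local structure of $\mathcal{A}$ on each chart. Because the weights $a_i$ are in general $> 1$, the degree-zero parts of the localizations at $x_i^{h_i}$ are not simply quantum affine spaces (as in the unweighted situation of Lemma \ref{lem1-2}) but fixed subrings or twisted group rings under cyclic group actions; Smith's analysis in \cite{smith2004maps} is needed to make these rings explicit enough that the above case analysis can be carried out. Tracking how these local structures interact with the quasi-Veronese equivalence of Step 1, and ensuring that the generic stratum contributes exactly $n-1$ rather than more, is the most delicate part of the argument.
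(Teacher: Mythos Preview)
Your overall strategy---pass to a quasi-Veronese so that the algebra is generated in degree $1$, view $\op{qgr}(C/(f))$ as coherent modules over a sheaf of algebras on $\op{Proj}(k[s_0,\ldots,s_n]/(\sum_i s_i))$, and then compute global dimension on each affine chart via the regular-sequence/twisted-group-ring dichotomy---is the same as the paper's. But there is a genuine gap in how you describe the local algebras.

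You write that ``the stalk of $\mathcal{A}$ on the affine chart $D_+(s_i)$ is essentially the degree-zero part of the localization of $C/(f)$ at $x_i^{h_i}$.'' This is not correct. After applying the quasi-Veronese functor $(-)^{[a]}$ (the paper uses $a=\max\{a_j\}$, not the lcm), the section algebra over $D_+(s_i)$ is a \emph{tiled matrix algebra}
\[
N_i=\bigl(E_{i,l-m}\bigr)_{0\le l,m\le a-1},\qquad E_i:=(C/(f))[x_i^{-1}],
\]
whose $(l,m)$-entry is the degree-$(l-m)$ piece of $E_i$; the degree-zero part $E_{i,0}$ is only the diagonal corner. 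Since $E_i$ is in general not strongly $\mathbb{Z}$-graded when $a_i>1$ (you have inverted only $x_i$, so there need not be units in degree $1$), $\op{Gr}(E_i)$ is \emph{not} equivalent to $\op{Mod}(E_{i,0})$, and your reduction to the degree-zero part does not go through. What the paper does instead is observe that $R_1=\bigoplus_{j=0}^{a-1}E_i(j)$ and $R_2=\bigoplus_{j=0}^{a_i-1}E_i(j)$ are both progenerators of $\op{Gr}(E_i)$, giving a Morita equivalence between $N_i\simeq\op{End}_{\op{gr}}(R_1)$ and the smaller $a_i\times a_i$ matrix algebra $M_i\simeq\op{End}_{\op{gr}}(R_2)$, and then computes $\op{gl.dim}(M_i)_{\tilde m}$.

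The degenerate case (some coordinate of $\tilde m$ equal to $0$) is also more delicate than you indicate: one cannot simply quotient by a variable as in the unweighted Lemma \ref{lem1-2}, because the relevant element sits inside the matrix algebra as a specific off-diagonal (or diagonal) normal element $J$ depending on the residue of $\op{deg}(x_1/x_0^{r_1})$ modulo $a_0$. The paper writes this matrix $J$ down explicitly and iterates, terminating not at $k[X]/(X^h+1)$ but at a tiled matrix algebra built from $(k\langle x,y\rangle/(xy-qyx,\,x^{l_1}+y^{l_2}))[x^{-1}]$, whose graded module category has global dimension $0$. Your last paragraph concedes that this local identification is ``the most delicate part,'' but the proposal does not supply it, and the identification you do give (degree-zero part, reducing to a one-variable quotient) would not yield the correct answer.
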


\begin{proof}
We use the idea of the proof in Lemma \ref{lem1-2}.
We consider an $\mathbb{N}$-graded $k$-algebra $B := k[s_0,\cdots,s_n]/(\sum_{i=0}^n s_i)$ with $s_i=x_i^{h_i}$.
Then, $A^{[d]}$ is right noetherian and $\op{qgr}(C/(f)) \simeq \op{qgr}((C/(f))^{[d]})$ from Remark \ref{125114_1Sep23}.
So, it is enough to prove that $\op{gl.dim}(\op{qgr}((C/(f))^{[d]})) = n-1$.
Because $(C/(f))^{(d)}$ is finite over $B$, $(C/(f))^{[d]}$ is also finite over $B$.
In addition, $(C/(f))^{[d]}$ is generated in degrees $0$ and $1$ from Lemma \ref{190844_31Aug23}.
So, $\op{qgr}((C/(f))^{[d]})$ is equivalent to the category of coherent modules over a sheaf $\mathcal{A}$ of $\mathcal{O}_{\op{Proj}(B)}$-algebra, where $\mathcal{A}$ is the sheaf on the projective spectrum $\op{Proj}(B)$ which is locally defined by a tiled matrix algebra
{\small
\[N_i = 
 \begin{pmatrix}
 E_{i,0} & E_{i,1} & \cdots & E_{i,d-1} \\
E_{i,-1} & E_{i,0} & \cdots & E_{i,d-2} \\
\vdots & \vdots & \cdots &\vdots \\
E_{i,-d+1} & E_{i,-d+2} & \cdots & E_{i,0} \\
\end{pmatrix}
\]
}
 on each $D_+(s_i)$.
Here, $E_i := (C/(f))[x_i^{-1}]$ and $E_{i,j}$ is the degree $j$ part of $E_i$.
As in the proof of Lemma \ref{lem1-2}, it is enough to show that the global dimension of $N_i$ is $n-1$ for all $i$. 

On the other hand, $R_1 := E_i \oplus E_i(1) \oplus \cdots \oplus E_i(d-2) \oplus E_i(d-1)$ and $R_2 := E_i \oplus E_i(1) \oplus \cdots \oplus E_i(d_i-2) \oplus E_i(d_i-1)$ are progenerators in $\op{Gr}(E_i)$.
So, the category of right $\op{End}_{\op{gr}}(R_1)$-modules and the category of right $\op{End}_{\op{gr}}(R_2)$-modules are equivalent because they are equivalent to the category of graded right $E_i$-modules (cf. \cite[Lemma 4.8]{smith-noncommutative}, \cite[Remarks after Proposition 4.5]{smith2004maps}).
We also have $\op{End}_{\op{gr}}(R_1) \simeq N_i$ and  
{\small
\[\op{End}_{\op{gr}}(R_2) \simeq M_i := 
 \begin{pmatrix}
 E_{i,0} & E_{i,1} & \cdots & E_{i,d_i-1} \\
E_{i,-1} & E_{i,0} & \cdots & E_{i,d_i-2} \\
\vdots & \vdots & \cdots &\vdots \\
E_{i,-d_i+1} & E_{i,-d_i+2} & \cdots & E_{i,0} \\
\end{pmatrix}.
\]
}
So, it is sufficient to prove the global dimension of $M_i$ is $n-1$ for each $i$.

For simplicity, we assume $i=0$.
When $i \neq 0$, we can show the claim in the same way.
Let $D=k[S_1,\cdots, S_n]/(1+\sum_{j=0}^n S_j)$ with $S_j = s_j/s_0$.
We show that the global dimension of the $D$-algebra $M_0$ is $n-1$.
The module structure of $M_0$ is given by the identification $S_j = (x_j^{h_j}/x_0^{h_0})I_{d_0} \in M_0$, where $I_{d_0}$ is the $(d_0 \times d_0)$-identity matrix.
Let $\tilde{m} = (S_1-a_1, \cdots S_n-a_n) \  (a_j \in k)$ be a maximal ideal of $D$ with $1+ \sum_{j=1}^n a_j = 0$.
It is sufficient to show that $\op{gl.dim}((M_0)_{\tilde{m}})=n-1$, where $(M_0)_{\tilde{m}}$ is the localization of $M_0$ at $\tilde{m}$ (cf. the second paragraph of the proof of Lemma \ref{lem1-2}).
We divide the proof of this claim into two cases.

\textit{Case (a) : all $a_j$ are not $0$. } 
Because $S_1-a_1,\cdots,S_n-a_n$ is a regular sequence in $(M_0)_{\tilde{m}}$, we show that the global dimension of $(M_0)_{\tilde{m}}/ \tilde{m} (M_0)_{\tilde{m}} \simeq M_0 / \tilde{m} M_0$ is $0$ (cf. the third paragraph of the proof of Lemma \ref{lem1-2}).

First, the category of $M_0 / \tilde{m} M_0$-modules is equivalent to the category of graded $E_0' := E_0/(x_1^{h_1}/x_0^{h_0}-a_1,\cdots, x_n^{h_n}/x_0^{h_0}-a_n)E_0$-modules.
This is a Morita equivalence obtained from the isomorphism $\op{End}_{\op{gr}}(E_0') \simeq M_0 / \tilde{m} M_0$ (cf. the three previous paragraph).

Next, we see that $E_0'$ is strongly graded.
Since $E_0 \simeq ( C[x_0^{-1}])/ (1 + (x_1^{h_1}/x_0^{h_0}) + \cdots + (x_n^{h_n}/x_0^{h_0}))$, we have $E_0' \simeq (C[x_0^{-1}])/ (x_1^{h_1}/x_0^{h_0}-a_1,\cdots, x_n^{h_n}/x_0^{h_0}-a_n)$.
For any $l \in \mathbb{Z}$, if $\tilde{x} := x_0^{l_0}x_1^{l_1} \cdots x_n^{l_n} \in (E_0')_{l} \ (l_0 \in \mathbb{Z}, l_1, \cdots l_n \in \mathbb{N})$, then there exist $k_1,\cdots,k_n \in \mathbb{N}$ such that $\tilde{x}' :=  x_0^{(-\sum k_i)h_0 - l_0}x_1^{k_1h_1-l_1} \cdots x_n^{k_nh_n-l_n} \in (E_0')_{-l}$.
Because $\tilde{x} \tilde{x}' \in k^{*}$, we get $1 \in (E_0')_l(E_0')_{-l}$ and $E_0'$ is strongly graded.

Since $E_0'$ is strongly graded, we have $\op{Gr}(E_0') \simeq \op{Mod}((E_0')_0)$.
Then, $(E_0')_0$ is a twisted group algebra, where a $k$-basis of $(E_0')_0$ is $\{x_0^{e_0}x_1^{e_1}x_2^{e_2} \cdots x_n^{e_n} \in (E_0')_0\mid \sum_{j=0}^n e_jd_j =0   \text{ and }  0 \leq e_j < h_j \ ({}^\forall j =1,2,\cdots,n) \}$.
In particular, $(E_0')_0 $ is semisimple.
Hence, the graded global dimension of $E_0'$ is $0$ and  $\op{gl.dim}(M_0/\tilde{m}M_0)=0$. 

\textit{Case (b) : some of $a_j$ are $0$. }
For example, we assume $a_1 = 0$.
Then, $(x_1^{h_1}/x_0^{h_0})I_{d_0}$ is an annihilator of any simple $M_0$-module $N$.
On the other hand, we have a unique integer $r_1$ such that $0 \leq \op{deg}(x_1/x_0^{r_1}) \leq d_0-1$.
If $\op{deg}(x_1/x_0^{r_1}) = 0 $, then $J = x_1/x_0^{r_1}I_{d_0}$ annihilates $N$.
Otherwise, the matrix
{\footnotesize
\[
J = \left( 
\begin{array}{c:c}  
 {\Huge O} & 
 {\begin{array}{ccc} x_1/x_0^{r_1} &  &\\ 
 & \ddots & \\
 & & x_1/x_0^{r_1} 
 \end{array}} \\ 
 \hdashline
  {\begin{array}{ccc} x_1/x_0^{r_1+1}& &  \\ 
  & \ddots& \\
   & &x_1/x_0^{r_1+1} \end{array}} & {\Huge O}
\end{array}
\right)
\in M_0
\]
}
annihilates $N$ because ${}^\exists n_J \in \mathbb{N}$ such that $J^{n_J} = (x_1^{h_1}/x_0^{h_0})I_{d_0}$  (the reduction of $N_i$ to $M_i$ is used here).
Thus, it is enough to prove that the global dimension of $(M_0/JM_0)_{\tilde{m}} = n-2$ (cf. the fourth paragraph of the proof of Lemma \ref{lem1-2}).
Note that we have
\vspace{-0.2mm}
{\small
\begin{align}
 M_0/JM_0 \simeq 
 \begin{pmatrix}
 F_{0,0} & F_{0,1} & \cdots & F_{0,d_0-1} \\
F_{0,-1} & F_{0,0} & \cdots & F_{0,d_0-2} \\
\vdots & \vdots & \cdots &\vdots \\
F_{0,-d_0+1} & F_{0,-d_0+2} & \cdots & F_{0,0} 
\end{pmatrix}, \label{isom}
\end{align}
}
where $F_0 = E_0/ x_1E_0 \simeq k\la x_0,x_2,\cdots,x_n \ra / (x_jx_i-q_{ji}x_ix_j, x_0^{h_0}+x_2^{h_2}+\cdots+x_n^{h_n})_{i,j}[x_0^{-1}]$ 
and $F_{0,j}$ is the degree $j$ part of $F_0$.

If any of $a_2, \cdots, a_n$ is not $0$, we can reduce to the case (a) from \ref{isom}.
If some of  $a_2, \cdots, a_n$ are $0$, repeat the above process until we can reduce to the case (a). 
\end{proof}

\begin{proof}[Proof of Theorem \ref{thm2}]
 $\op{gl.dim}(\op{qgr}(C/(f)))$ is finite.
So, the balanced dualizing complex $^{\phi}(C/(f))^1 [n]$ of $C/(f)$ induces the Serre functor of $\op{qgr}(C/(f))$ from {}\cite[Theorem A.4]{de2004ideal}.
We complete the proof as in the proof of Theorem \ref{thm1}.
\end{proof}

\section{Comparison and closed points}
In this section, we calculate closed points of noncommutative projective Calabi-Yau schemes of dimension $2$ obtained in Section \ref{122542_31Aug23} and compare our examples with commutative Calabi-Yau varieties and the first examples constructed in \cite{kanazawa2015}.
In particular, we show that a noncommutative projective Calabi-Yau scheme in Section \ref{122542_31Aug23} gives essentially a new example of noncommutative projective Calabi-Yau schemes.
\label{ptmods}






\begin{eg}
\label{wteg}
 Any weight $(d_0,d_1,d_2,d_3)$ of noncommutative projective Calabi-Yau 2 schemes in Theorem \ref{thm2} such that $\op{gcd}(d_0,d_1,d_2,d_3)=1$ is one of the following (obtained by using a computer):
\begin{align*}
 (d_0,d_1,d_2,&d_3) = (1,1,1,1),(1,1,1,3),(1,1,2,2),(1,1,2,4),(1,1,4,6), (1,2,2,5), \\ &(1,2,3,6), (1,2,6,9), (1,3,4,4), (1,6,14,21), (2,3,3,4), (2,3,10,15).
\end{align*}
\end{eg}
\vspace{-2.2mm}
From now, we focus on the closed points of noncommutatative projective Calabi-Yau 2 schemes in Theorem \ref{thm2} whose weights are of type $(1,1,a,b)$. 
We recall the notion of closed points of noncommutative projective schemes.

For simplicity, we often call an $\mathbb{N}$-graded $k$-algebra of the form $k \langle z_0, \cdots, z_m \rangle /(z_jz_i-p_{ji}z_iz_j)_{i,j} \ (p_{ji} \in k^\times, m \in \mathbb{N})$ with $\op{deg}(z_i)>0$
a  weighted quantum polynomial ring. 
$(p_{ji})$ is called the quantum parameter.

\begin{dfn}[{\cite[Section 3.1]{mori2015regular}}]
  Let $A$ be a finitely generated right noetherian connected $\mathbb{N}$-graded $k$-algebra.
A closed point of $\op{proj}(A)$ is an object of $\op{qgr}(A)$ represented by a $1$-critical module of $A$.
In particular, if $A$ is a quotient of a weighted quantum  polynomial ring, then every point is one of the following:
\begin{enumerate}
 \setlength{\itemsep}{0cm} 
 \item An ordinary point, which is represented by a finitely generated $1$-critical module of multiplicity $1$.
 \item A fat point, which is represented by a finitely generated $1$-critical module of multiplicity $>1$.
 \item A thin point, which is represented by a finitely generated $1$-critical module of multiplicity $<1$.
\end{enumerate}
For the definitions of $1$-critical modules and multiplicities, see \cite[Definition 3.1, 3.10]{mori2015regular}.
Note that if $A$ is generated in degree $1$, the notion of ordinary points and that of point modules are the same, and there is no thin point.
We denote by  $| \op{proj}(A) |$ the set of closed points of $\op{proj(A)}$.
\end{dfn}

Let $C := k \langle x_0,x_1,x_2,x_3 \rangle / (x_jx_i-q_{ji}x_ix_j)_{i,j}$ whose weight is of type $(d_0,d_1,d_2,d_3) = (1,1,a,b) \ (0 < a \leq b)$.
We assume that $q_{ij}q_{ji}=q_{ii}=1$ for all $i,j$.
Since  $d_0=1$, $C[x_0^{-1}]$ is strongly graded.
So, from \cite[Theorem 4.20]{mori2015regular}, we have
{
\[
 | \op{proj}(C) | = | \op{spec}(C[x_0^{-1}]_0) | \bigsqcup | \op{proj}(C/(x_0)) |, 
\]
}
where we denote by $| \op{spec}(C[x_0^{-1}]_0) |$ the set of simple modules of $C[x_0^{-1}]_0$.
In this equality, the $1$ (resp. $n>1$)-dimensional simple modules of $ \op{spec}(C[x_0^{-1}]_0)$ correspond to ordinary (resp. fat) points in $\op{proj}(C)$.
Similarly, we have
{
\begin{align*}
  | \op{proj}(C) | &= | \op{spec}(C[x_0^{-1}]_0) | \bigsqcup | \op{spec}(C/(x_0)[x_1^{-1}]_0)| \bigsqcup | \op{proj}(C/(x_0,x_1)) | .
\end{align*}
}
It easy to see that $C[x_0^{-1}]_0$ is isomorphic to $k \langle X_1,X_2,X_3 \rangle / (X_jX_i-q'_{ji}X_iX_j)_{i,j}$, where $q'_{ji} := q_{0j}^{d_i}q_{ji}q_{i0}^{d_j} \ (i,j \neq 0)$.
$C/(x_0)[x_1^{-1}]_0$ is also isomorphic to $k \langle Y_2,Y_3 \rangle / (Y_3Y_2-p_{32}Y_2Y_3)$, where $p_{32}:= q_{13}^{d_2}q_{32}q_{21}^{d_3}$.

Let $C_1:= k \langle x_0', x_1', x_2', x_3' \rangle /(x_j'x_i' - q'_{ji} x_i'x_j')_{i,j}$, where $\op{deg}(x_i')=1$, $q'_{0i}=q'_{j0}=1$ for all $i,j$.
Let $C_2:= k \langle y_1, y_2, y_3 \rangle /(y_jy_i -p_{ji} y_iy_j)_{i,j}$, where $\op{deg}(y_i')=1$, $p_{1i}=p_{j1}=1$ for all $i,j$.
Then, we can consider the point scheme of $\op{proj}(C_1)$ (resp. $\op{proj}(C_2)$), which is isomorphic to the set of ordinary points $| \op{proj}(C_1) |_{\text{ord}}$ (resp. $| \op{proj}(C_2) |_{\text{ord}}$) as sets.
Thus, we regard $| \op{proj}(C_1) |_{\text{ord}}$ (resp. $| \op{proj}(C_2) |_{\text{ord}}$) as the point scheme of $\op{proj}(C_1)$ (resp. $\op{proj}(C_2)$).

Let $| \op{spec}(C[x_0^{-1}]_0) |_1 $ (resp. $| \op{spec}(C/(x_0)[x_1^{-1}]_0) |_1$) be the set of $1$-dimensional simple modules of $C[x_0^{-1}]_0$ (resp. $C/(x_0)[x_1^{-1}]_0$).
Because $C_1[{x'_0}^{-1}]_0 \simeq C[x_0^{-1}]_0$ and $C_2[{y_1}^{-1}]_0 \simeq C/(x_0)[x_1^{-1}]_0$, 
we can think of $| \op{spec}(C[x_0^{-1}]_0) |_1 $ (resp. $|\op{spec}(C/(x_0)[x_1^{-1}]_0) |_1$) as a locally closed subscheme of $| \op{proj}(C_1) |_{\text{ord}}$ (resp. $| \op{proj}(C_2) |_{\text{ord}}$) from \cite[Theorem 4.20]{mori2015regular}.

\begin{lemm}
\begin{enumerate}
    \item If $q'_{ji} \neq 1$ for all $i,j \neq 0$, $| \op{spec}(C[x_0^{-1}]_0) |_1 $ is a union of three affine lines.  
    \item  If $p_{32} \neq 1$, $| \op{spec}(C/(x_0)[x_1^{-1}]_0) |_1$ is a union of two affine lines. 
    Otherwise, $| \op{spec}(C/(x_0)[x_1^{-1}]_0) |_1 \simeq \mathbb{A}^2$.
\end{enumerate}
\label{lem1}
\end{lemm}
\begin{proof}
 (2) is well-known (for example, see \cite[Section 4.3]{smith-noncommutative}).
Regarding (1),
under the assumption of the lemma, $\op{proj}(C_1)$ belongs to case (3) or case (4) in \cite[Corollary 5.1]{vitoria2010equivalences}.
This shows that $ |\op{spec}(C_1[{x'_0}^{-1}]_0) |_1$ is isomorphic to $\bigcup_{i \neq j} Z(X'_i,X'_j) \subset \mathbb{A}^3 = \op{Spec}(k[X'_1,X'_2,X'_3])$ (cf. \cite[Proposition 4.2]{vitoria2010equivalences} or \cite[Theorem 1]{belmans2016point}).
\end{proof}

\begin{rmk}
    We consider the weights $(1,1,a,b)$ and the quantum parameters which give noncommutative projective Calabi-Yau 2 schemes in Theorem \ref{thm2}. 
    Then, we can check that if $p_{32} \neq 1$, then $q'_{ji} \neq 1$ for all $i,j \neq 0$ by using a computer.
    Moreover, if $p_{32}=1$, then $q'_{ji} = 1$ for all $i,j \neq 0$.
    In this case, $| \op{spec}(C[x_0^{-1}]_0) |_1 \simeq \mathbb{A}^3$.
    \label{clptsncc}
\end{rmk}

We consider $C/(x_0,x_1) = k \langle x_2,x_3 \rangle /(x_3x_2-q_{32}x_2x_3)$. 
Then, it is known that a weighted quantum polynomial ring of $2$ variables is a twisted algebra of a commutative weighted polynomial ring $k[x,y]$ with $\op{deg}(x) = a >0,\op{deg}(y)=b>0$ (for example, see \cite[Example 4.1]{stephenson200070} or \cite[Example 3.6]{zhang1996twisted}).
So, it is enough to consider the 
 closed points of $\op{proj}(k[x,y])$.
We want to study the closed points of $\op{proj}(k[x,y])$ in the case of $(a,b) = (2,2),(2,4) \text{ or } (4,6)$. 
Note that when $(a,b) = (1,1)$ or $ (1,3)$, they are classified in \cite[Theorem 3.16]{mori2015regular}.
We treat a more general setting below.
\begin{lemm}
 Let $R=k[x,y]$ be a commutative weighted polynomial ring with $\op{deg}(x)=a>0,\op{deg}(y)=b>0$.
Let $g := \op{gcd}(a,b), a':= a/g$ and $b' := b/g$.
Then, every closed point of $\op{proj}(R)$ is one of the following:
\begin{enumerate}
 \setlength{\itemsep}{0cm} 
 \item $\pi R/(x)(-i), \;  i=0,\cdots,b-1$.
 \item $\pi R/(y)(-j), \; j=0, \cdots, a-1$.
 \item $\pi R/(\beta x^{b'}-\alpha y^{a'})(-k) ,$ where $(\alpha, \beta) \in \mathbb{P}^1 \backslash \{(0,1),(1,0)\}$ and $k= 0,\cdots, g-1$. 
\end{enumerate}
Moreover, all of them are not isomorphic in $\op{proj}(R)$.
\label{lem2}
\end{lemm}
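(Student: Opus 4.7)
The plan is to exploit the fact that $R = k[x,y]$ is a commutative graded $2$-dimensional UFD and to use the general characterization of $1$-critical modules via height-one primes. Specifically, I would reduce the classification of closed points of $\op{proj}(R)$ to (i) determining the graded height-one primes $P$ of $R$, and (ii) classifying, for each such $P$, the graded torsion-free rank-one $R/P$-modules up to $\op{qgr}$-isomorphism.

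The first step is to list the graded height-one primes. Since $R$ is a graded UFD, each such prime is generated by an irreducible homogeneous polynomial. Any irreducible homogeneous polynomial in which both $x$ and $y$ actually appear must have degree divisible by $\op{lcm}(a,b) = a'b'g$, and it is then a polynomial in $u := x^{b'}$ and $v := y^{a'}$; over the algebraically closed field $k$ it factors into linear forms $\beta u - \alpha v$ with $\alpha, \beta \in k^*$. Together with the monomial primes $(x)$ and $(y)$, this yields exactly the three families of primes appearing in the statement.

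The second step counts, for each such prime $P$, the non-isomorphic shifts of $R/P$ in $\op{qgr}(R)$. Any $1$-critical module with annihilator $P$ is a graded rank-one torsion-free $R/P$-module, and in $\op{qgr}$ such a module is isomorphic to a shift of $R/P$ (after replacing $R/P$ by its graded normalization if needed; the difference is finite-dimensional). Multiplication by a nonzero homogeneous element $f \in (R/P)_d$ provides a graded injection $(R/P)(-d) \hookrightarrow R/P$ whose cokernel has finite length since $R/P$ is $1$-dimensional, hence yields a $\op{qgr}$-isomorphism; composing such maps (and their formal inverses in $\op{qgr}$) identifies shifts $k$ and $k'$ precisely when $k-k'$ lies in the subgroup $G_P \subseteq \mathbb{Z}$ generated by the degrees of nonzero homogeneous elements of $R/P$. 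A direct Hilbert function computation gives $G_{(x)} = b\mathbb{Z}$, $G_{(y)} = a\mathbb{Z}$, and $G_{(\beta x^{b'} - \alpha y^{a'})} = g\mathbb{Z}$, yielding $b$, $a$, and $g$ representatives respectively; non-isomorphism across different primes is automatic from distinct annihilators.

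For the thinness claim, I would compute the multiplicity of each class asymptotically from its Hilbert function: $\dim_k (R/(x))_n = 1$ iff $b \mid n$, giving multiplicity $1/b$, and analogously $1/a$ for $R/(y)$; for $P = (\beta x^{b'} - \alpha y^{a'})$, reducing monomials modulo $\beta x^{b'} = \alpha y^{a'}$ shows $\dim_k (R/P)_n = 1$ exactly when $g \mid n$ (for $n$ large), yielding multiplicity $1/g$. Since $g > 1$ forces $a \geq g > 1$ and $b \geq g > 1$, every multiplicity lies strictly below $1$, so every point is thin. The main obstacle I anticipate is the identification of rank-one torsion-free $R/P$-modules with shifts of $R/P$ in $\op{qgr}$ when $R/P$ is not integrally closed---for instance in the weight $(4,6)$ case, where $P = (\beta x^3 - \alpha y^2)$ produces a cuspidal cubic. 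I would handle this by passing to the graded normalization of $R/P$, which is a graded principal ideal domain generated in degree $g$, and observing that the inclusion $R/P \hookrightarrow \widetilde{R/P}$ has finite-dimensional cokernel and thus yields an isomorphism in $\op{qgr}$; the shift counting then reduces to the PID case.
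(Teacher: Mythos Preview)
Your proposal is correct and follows essentially the same route as the paper: both reduce to listing the graded height-one primes $(x)$, $(y)$, $(\beta x^{b'}-\alpha y^{a'})$ of the weighted UFD $R$ and then counting the non-isomorphic shifts of each $R/P$ in $\op{qgr}(R)$ via multiplication-by-$f$ isomorphisms (your subgroup $G_P$ is exactly the device the paper uses implicitly, case by case), with thinness read off from the Hilbert function. The only packaging difference is that the paper, following \cite{mori2015regular}, invokes directly that every closed point admits a \emph{cyclic} critical Cohen--Macaulay representative of depth $1$, hence is already of the form $R/P$, whereas you arrive at the same conclusion through the annihilator and the graded normalization of $R/P$.
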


\begin{proof}
 The proof is almost the same as the proof of \cite[Lemma 3.15, Theorem 3.16]{mori2015regular}.
We give the sketch of the proof.

Firstly, every closed point of $\op{proj}(R)$ is represented by a cyclic critical Cohen-Macaulay module of depth $1$.
Then, $M \in \op{gr}(R)$ satisfies these conditions and is generated in degree $0$ if and only if $M$ is isomorphic to one of $ R/(x), R/(y) \text{ or } R/(\beta x^{b'}-\alpha y^{a'} ) \; (\alpha, \beta \in k^\times)$.
Since being cyclic critical Cohen-Macaulay of depth $1$ is invariant under shifting, any closed point is represented by some shifts of one of the above modules (that is, $R/(x)(-l), R/(y)(-l), R/(\beta x^{b'}-\alpha y^{a'})(-l), \ l \in \mathbb{Z}$).

Finally, we classify the isomorphic classes of these modules in $\op{proj}(R)$.
We have no isomorphisms between the three types of closed points by considering their Hilbert polynomials and multiplicities.
Then, we have $\pi R/(\beta x^{b'}-\alpha y^{a'}) \simeq \pi R/(\beta x^{b'}-\alpha y^{a'})(-gl), \ ({}^\forall l \in \mathbb{Z}, {}^\forall (\alpha, \beta) \in \mathbb{P}^1 \backslash \{ (1,0), (0,1)\})$.
We also have $\pi R/(\beta x^{b'}-\alpha y^{a'}) \simeq \pi R/(\beta' x^{b'}-\alpha' y^{a'})$ if and only if $(\alpha, \beta) = (\alpha', \beta')$ in $\mathbb{P}^1 \backslash \{ (1,0), (0,1)\}$.
In addition, we can show that $\pi R/(x) \simeq \pi R/(x)(-i)$ (resp. $\pi R/(y) \simeq \pi R/(y)(-j)$) if and only if $i \equiv 0 \; (\op{mod} \; b) $ (resp. $j \equiv 0 \; (\op{mod} \; a)$).
From these discussions, we get the claim.
\end{proof}

We can study ordinary and thin points of noncommutative projective Calabi-Yau 2 schemes in Theorem \ref{thm2} by using the above investigations.
We give examples of noncommutative projective Calabi-Yau schemes whose moduli of ordinary closed points are different from those in \cite[Proposition 3.4]{kanazawa2015} and commutative Calabi-Yau varieties.

\begin{eg}
  We consider the weight $(1,1,2,2)$ and the quantum parameter 
\[
 \mathbf{q} = (q_{ij}) = 
 {\small
 \begin{pmatrix}
  1 & 1& 1 & \omega^2 \\
  1 & 1 & \omega^2 & 1 \\
  1 & \omega & 1 & 1 \\
 \omega & 1 & 1 & 1 \\
 \end{pmatrix}
 }
 , \quad \omega := \frac{-1+i\sqrt{3}}{2}.
\]
Then, we have 
\vspace{-3mm}
\begin{align*}
 \mathbf{q}' = (q_{ij}') = 
 {\small
\begin{pmatrix}
 1 & \omega^2 & \omega \\
 \omega  & 1 & \omega^2 \\
 \omega^2  & \omega & 1
\end{pmatrix}
}
, \quad
q_{13}^{d_2}q_{32}q_{21}^{d_3}= \omega^2.
\end{align*}
From Lemma \ref{lem1} and Lemma \ref{lem2}, the set of ordinary and thin points 
{
\begin{align*}
  | \op{proj}(C/(f)) |_{\text{ord \& thin}} &= | \op{spec}(C/(f)[x_0^{-1}]_0) |_1 \bigsqcup | \op{spec}(C/(f, x_0)[x_1^{-1}]_0)|_1 \\
  &\bigsqcup | \op{proj}(C/(f,x_0,x_1)) |
\end{align*}
}
is $24$ points. 
To be more precise, we have $| \op{spec}(C/(f)[x_0^{-1}]_0) |_1 = \bigsqcup_{i \neq j} Z(X_i,X_j, 1+X_1^6+X_2^3+X_3^3) \subset \mathbb{A}^3$, $| \op{spec}(C/(f,x_0)[x_1^{-1}]_0)|_1 = \bigsqcup_{i=1,2} Z(Y_i,1+Y_2^3+Y_3^3)$ and $| \op{proj}(C/(f,x_0,x_1)) | = \{3 \text{pts}\} \sqcup \{3 \text{pts}\}$. 

This calculation shows that for a fixed weight, if the number of the set of ordinary and thin points of $\op{proj}(C/(f))$ is finite, then the number is independent of the quantum parameters.
\label{eg2}
\end{eg}

From the method in Example \ref{eg2}, Remark \ref{clptsncc} and  a direct computation, we have the following.

\begin{prop}
 For a weight $(1,1,a,b)$ in Example \ref{wteg} and a quantum parameter $\mathbf{q}$ which gives a noncommutative projective Calabi-Yau scheme, if the set of ordinary and thin points of $\op{proj}(C/(f))$ is finite, then the number of the set is always $24$.
\end{prop}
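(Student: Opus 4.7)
The plan is to apply the strong-gradation decomposition \cite[Theorem 4.20]{mori2015regular} twice, first at $x_0$ and then at $x_1$, to obtain
\[
 \mid \op{proj}(C/(f)) \mid_{\text{ord \& thin}} = \mid \op{spec}((C/(f))[x_0^{-1}]_0) \mid_1 \sqcup \mid \op{spec}((C/(f))/(x_0)[x_1^{-1}]_0) \mid_1 \sqcup \mid \op{proj}((C/(f))/(x_0,x_1)) \mid,
\]
exactly as in Example \ref{eg2}. Setting $d := 2+a+b$ and $h_i := d/a_i$, the first two pieces are the $1$-dimensional simple spectra of quotients of a $3$-variable and a $2$-variable quantum affine space by the dehomogenized hypersurfaces $1+y_1^d+y_2^{h_2}+y_3^{h_3}$ and $1+z_2^{h_2}+z_3^{h_3}$ respectively, and the third piece is a quotient of a weighted noncommutative projective line by the hypersurface $x_2^{h_2}+x_3^{h_3}$.

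For the first two pieces I would invoke the classification of $1$-dimensional simples of quantum affine spaces from \cite{mukherjee2020construction} recalled in Lemma \ref{lem1}: a $1$-dimensional simple of $k_{\mathbf{q}}[y_1,\ldots,y_r]$ has support on an index set $S$ only if $q_{ij}=1$ for all $i,j \in S$. Whenever some pair of indices were admissible, the corresponding locus of the dehomogenized hypersurface would be a positive-dimensional affine variety; hence the finiteness hypothesis forces every off-diagonal $q'_{ij}$ (resp. the relevant parameter $q_{13}^{a_2}q_{32}q_{21}^{a_3}$) to be nontrivial, and each $1$-dimensional simple is supported on a single coordinate axis. Counting the roots of $1+y_i^{h_i}=0$ axis by axis then yields $h_1+h_2+h_3=d+h_2+h_3$ contributions from the first piece and $h_2+h_3$ from the second. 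The third piece is handled via Lemma \ref{lem2}: the modules $\pi R/(x_2)(-i)$ and $\pi R/(x_3)(-j)$ are not annihilated by $x_2^{h_2}+x_3^{h_3}$, so only the shifts of the ``mixed'' line modules $\pi R/(\beta x_2^{b'}-\alpha x_3^{a'})(-k)$ contribute, giving $g\cdot N$ closed points, where $g=\gcd(a,b)$ and $N$ is the number of such linear factors of $x_2^{h_2}+x_3^{h_3}$ in the twisted two-variable ring, which under the Calabi-Yau condition factors as $(x_2^{b'})^m+(x_3^{a'})^m$ with $m=d/\op{lcm}(a,b)$, so $N=m$.

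What remains is the direct arithmetic check that
\[
 (d+h_2+h_3) + (h_2+h_3) + g\cdot m = 24
\]
for each weight $(1,1,a,b)$ in Example \ref{wteg}. A short case table does this: $(1,1,1,1)$ gives $12+8+4$, $(1,1,1,3)$ gives $14+8+2$, $(1,1,2,2)$ gives $12+6+6$ (matching Example \ref{eg2}), $(1,1,2,4)$ gives $14+6+4$, and $(1,1,4,6)$ gives $17+5+2$. Equivalently, one can verify the identity $(2+a+b)(ab+2a+2b+g^2)=24ab$ uniformly, which is a satisfying match with the Euler characteristic of a commutative K3 surface.

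The main obstacle I expect is the third region: determining $N$ rigorously requires verifying the factorization of $x_2^{h_2}+x_3^{h_3}$ in $k\la x_2,x_3\ra/(x_3x_2-q_{32}x_2x_3)$ under the Calabi-Yau constraint $c^{a_j}=\prod_i q_{ij}$, checking that $q_{32}^{a'b'}=1$ so that $x_2^{b'}$ and $x_3^{a'}$ commute, and confirming that all $g\cdot N$ shifted modules are pairwise non-isomorphic. A secondary subtlety is the weight $(1,1,2,5)$ in Example \ref{wteg}, for which $d=9$ is divisible by neither $2$ nor $5$, so the diagonal form of $f$ in Theorem \ref{thm2} is ill-defined; either this weight must be excluded from the finite case, or $f$ must be replaced by a non-diagonal bihomogeneous polynomial and the counting argument adapted accordingly.
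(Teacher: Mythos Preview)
Your proposal is correct and follows exactly the approach the paper intends: the paper's proof is the single sentence ``From Example~\ref{eg2} and a direct computation, we have the following proposition,'' and your argument is precisely that direct computation, carried out along the decomposition of Example~\ref{eg2}. Your case table agrees with the paper's count $12+6+6$ for the weight $(1,1,2,2)$, and the remaining cases check. The identity $(d+h_2+h_3)+(h_2+h_3)+g\cdot m = 24$ is a nice closed form that the paper does not record.

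Two remarks. First, for the third region you do not actually need the delicate noncommutative factorisation of $x_2^{h_2}+x_3^{h_3}$: the paper already reduces $\op{proj}(C/(x_0,x_1))$ to the commutative $\op{proj}(k[x_2,x_3])$ via a Zhang twist, and then Lemma~\ref{lem2} plus the commutative factorisation $(x_2^{b'})^m+(x_3^{a'})^m=\prod_{\zeta^m=-1}(x_2^{b'}-\zeta x_3^{a'})$ gives the $g\cdot m$ points directly, with no need to verify $q_{32}^{a'b'}=1$. Second, your observation about $(1,1,2,5)$ is well taken: that weight does not satisfy the divisibility hypothesis $a_i\mid d$ of Theorem~\ref{thm2}, so the diagonal $f$ is not defined and the proposition should be read as ranging only over those weights $(1,1,a,b)$ in Example~\ref{wteg} for which Theorem~\ref{thm2} applies.
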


The following proposition shows that some of noncommutative projective Calabi-Yau 2 schemes in Theorem \ref{thm2} are essentially new examples. 

\begin{prop}
\label{110549_8Sep23}
 There exists a noncommutative projective Calabi-Yau 2 scheme which is obtained in Theorem \ref{thm2} and not isomorphic to either commutative Calabi-Yau surfaces or noncommutative projective Calabi-Yau 2 schemes obtained in \cite{kanazawa2015}.
\end{prop}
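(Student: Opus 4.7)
The plan is to take as witness the noncommutative Calabi-Yau surface produced by Theorem \ref{thm2} with weight $(1,1,2,2)$ and the explicit quantum parameter $\mathbf{q}$ of Example \ref{eg2}, and to separate it from both families by means of categorical invariants of $\op{qgr}(C/(f))$ coming from closed-point data. The two comparisons will be drawn from different intrinsic invariants: cardinality of the set of ordinary closed points on the one hand, and mere existence of a thin closed point on the other.

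First I would rule out commutative Calabi-Yau surfaces. Any commutative Calabi-Yau surface $X$ has $\dim X = 2$ and hence uncountably many closed points, and these coincide with the ordinary points of the noncommutative projective scheme $(\op{Coh}(X), \mathcal{O}_X)$ in the sense of \cite[Definition 3.4]{mori2015regular}. By Example \ref{eg2} together with the preceding proposition, the set of ordinary and thin closed points of our $\op{proj}(C/(f))$ has cardinality exactly $24$, so the set of ordinary closed points is finite. Since ``ordinary'' is an intrinsic invariant of $\op{qgr}$, no equivalence of abelian categories can exchange a finite set of ordinary points for an uncountable one.

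Next I would rule out Kanazawa's examples. Every noncommutative projective Calabi-Yau surface in \cite{kanazawa2015} is of the form $\op{proj}(A/(h))$ with $A$ a quantum polynomial ring in four variables of weight $(1,1,1,1)$ and $h$ a central quartic; in particular $A/(h)$ is generated in degree $1$, so by the last remark in the definition of closed points it admits no thin points. It therefore suffices to exhibit a thin point in $\op{proj}(C/(f))$. Applying \cite[Theorem 4.20]{mori2015regular} twice, one obtains
\[
 \mid \op{proj}(C/(f)) \mid = \mid \op{spec}((C/(f))[x_0^{-1}]_0) \mid \sqcup \mid \op{spec}((C/(f,x_0))[x_1^{-1}]_0) \mid \sqcup \mid \op{proj}(C/(f,x_0,x_1)) \mid,
\]
and the third summand is $\op{proj}$ of (a twist of) $k[x_2,x_3]/(x_2^3+x_3^3)$ with $\op{deg} x_2 = \op{deg} x_3 = 2$; for the $\mathbf{q}$ chosen in Example \ref{eg2} one in fact has $q_{32}=1$, so no twist is present. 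Lemma \ref{lem2} applied with $a=b=2$ and $g=2>1$ asserts that every closed point of $\op{proj}(k[x_2,x_3])$ is thin, and thinness is inherited by the $1$-critical quotients by $x_2^3+x_3^3$ because the Hilbert series of the representing module, and hence its multiplicity, is unchanged. Thus $\op{proj}(C/(f))$ carries at least one thin point while any Kanazawa surface has none, forbidding an equivalence $\op{qgr}(C/(f)) \simeq \op{qgr}(A/(h))$.

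The main obstacle I anticipate is precisely the last step of inheritance of thinness under the quotient by $f$ in the third summand, that is, verifying that the modules classifying closed points of $\op{proj}(k[x_2,x_3]/(x_2^3+x_3^3))$ really are $1$-critical of multiplicity $<1$ in the framework of \cite{mori2015regular}. This is essentially a Hilbert-series computation identical to the proof of Lemma \ref{lem2}, but it has to be checked that the Cohen--Macaulay and critical conditions descend to the hypersurface quotient; once this is done, both separations close off as outlined.
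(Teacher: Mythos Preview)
Your separation from the Kanazawa surfaces has a genuine gap. The trichotomy ordinary/fat/thin is defined via the \emph{multiplicity} of a representing $1$-critical module, and multiplicity is read off the Hilbert polynomial with respect to the shift functor on $\op{gr}(A)$. Since in this paper $\op{proj}(A)$ is only the pair $(\op{qgr}(A),\mathcal{A})$ without the shift, multiplicity is not an invariant of an isomorphism of noncommutative projective schemes. Concretely, a thin point such as $\pi(R/(x_2))$ in $\op{proj}(k[x_2,x_3])$ with $\op{deg}(x_2)=\op{deg}(x_3)=2$ has $\op{dim}_k\op{Hom}(\mathcal{O},-)=1$, the same degree as an ordinary point module; under a putative equivalence with a Kanazawa surface it could perfectly well land on a point module. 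So ``our example has a thin point, theirs has none'' does not produce a contradiction. Your anticipated obstacle --- inheritance of thinness under the quotient by $f$ --- is harmless by comparison; the problem is more fundamental.

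The paper treats the commutative comparison essentially as you do, but phrased through the intrinsic invariant $\op{dim}_k\op{Hom}(\mathcal{O},-)$ of Remark~\ref{rmk1} rather than through the word ``ordinary''. For the Kanazawa comparison, however, it abandons closed-point invariants entirely and instead uses the Morita-type result of \cite{burban2022morita}, as in Proposition~\ref{prop4}: one realizes $\op{qgr}(C/(f))$ as coherent modules over a sheaf $\mathcal{A}$ of algebras on a commutative $\op{Proj}$ (cf.\ Lemma~\ref{lem4}), forms the central subsheaf $\mathcal{Z}_{\mathcal{A}}$, and concludes that any equivalence of module categories would force $\op{Spec}(\mathcal{Z}_{\mathcal{A}})\cong\op{Spec}(\mathcal{Z}_{\mathcal{B}})$ as schemes. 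A direct check then shows $\op{Spec}(\mathcal{Z}_{\mathcal{A}})$ is singular for the weight-$(1,1,2,2)$ example of Example~\ref{eg2} (the affine charts over $D_+(s_2)$ and $D_+(s_3)$ are singular), whereas for every weight-$(1,1,1,1)$ Kanazawa surface the analogous $\op{Spec}(\mathcal{Z}_{\mathcal{B}})$ is smooth --- either $\mathbb{P}^2$ or a smooth quadric. That center computation is the missing ingredient in your argument.
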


\begin{proof}
We divide the proof into four steps.

\textit{Step 1.} We choose the weight $(1,1,a,b)$ and the quantum parameter $\mathbf{q}$ as in Example \ref{eg2}.
Then, the number of ordinary and thin points of $\op{proj}(C/(f))$ is finite.
So, $\op{proj}(C/f)$ is not isomorphic to any commutative Calabi-Yau surfaces.

\textit{Step 2.} We prove that $\op{proj}(C/(f))$ is not isomorphic to any noncommutative projective Calabi-Yau 2 schemes in \cite{kanazawa2015}.
To prove this, we use the theory established in \cite{burban2022morita}.
First, note that we can think of $\op{qgr}(C/(f))$ as the category of coherent modules of a sheaf $\mathcal{A}$ of algebras on the projective spectrum $\op{Proj}(k[s_0,s_1,s_2,s_3]/(s_0+s_1+s_2+s_3))$ (cf. the proof of Lemma \ref{lem4}). 
We define a sheaf $\mathcal{Z}_{\mathcal{A}}$ to be the sheaf whose sections are
{
\[
\Gamma(U,\mathcal{Z}_{\mathcal{A}}) = \{s \in \Gamma(U, \mathcal{A}) \mid s|_{V} \in Z(\Gamma(V,\mathcal{A})) , {}^\forall V \subset U : \text{open}\}
\] 
}
for all open subsets $U$ (cf. \cite[Proposition 2.11]{burban2022morita}).
In particular, if $U$ is affine, $\Gamma(U,\mathcal{Z}_{\mathcal{A}}) = Z(\Gamma(U,\mathcal{A}))$.
Then, we show that $\op{Spec}(Z(\Gamma(D_{+}(s_i), \mathcal{A})))$ has $4$ singular points when $i= 0,1$ and a $1$-dimensional singular locus when $i=2,3$.
In the following, we verify this claim for $i=0,2$. 
Similarly, the claim is proved for $i=1,3$.
In the following, we write $Z_i$ as $Z(\Gamma(D_{+}(s_i), \mathcal{A}))$ for any $i$.
We also use the notations in the proof of Lemma \ref{lem4}.

When $i=0$, any $m \in Z_0$ is of the form 
$
m = \left( \begin{smallmatrix}
    \mu_1 e & 0 \\
    0 & \mu_2 e
\end{smallmatrix} \right) \in N_0, \ (e \in E_{0,0}, \mu_1,\mu_2 \in k^\times )
$
from the definition of $\mathcal{A}$.
We have $E_{0,0} \simeq k \langle X_1,X_2,X_3 \rangle (X_jX_i-q'_{ji}X_iX_j, 1+X_1^6+X_2^3+X_3^3)_{i,j}$, which is obtained from the identifications $X_1 = x_1x_0^{-1},X_2=x_2x_0^{-2}$ and $X_3=x_3x_0^{-2}$. 
Here, the $q'_{ji}$ are as in Example \ref{eg2}.
So, $Z(E_{0,0}) \simeq k[Y,Z,W,U]/(1+Y^2+Z+W, YZW-\lambda_1 U^3) \ (\lambda_1 \in k^\times)$, which is obtained from the identifications $Y=(x_1x_0^{-1})^3, Z=(x_2x_0^{-2})^3, W=(x_3x_0^{-2})^3$ and $U=(x_1x_0^{-1})(x_2x_0^{-2})(x_3x_0^{-2})$.
On the other hand, we define the inclusion $\phi : Z(E_{0,0}) \rightarrow N_0$ in which $Y,Z,W$ are mapped naturally and $U$ to $
\left( \begin{smallmatrix}
    U & 0 \\
    0 & \omega U
\end{smallmatrix} \right) 
$.
It is easy to see that $\phi(Z(E_{0,0})) \subset Z_0 $.
Because the choice of $\mu_1$ determines $\mu_2$ in the above 
form of $m$, 
the map $\phi$ induces $Z_0\simeq Z(E_{0,0})$.
Thus, one can show that $\op{Spec}(Z_0)$ has $4$ singular points by using the Jacobi criterion.

When $i=2$, any $m \in Z_2$ is of the form 
$
m = \left( \begin{smallmatrix}
    \mu_1 e & 0 \\
    0 & \mu_2 e
\end{smallmatrix} \right) \in N_2, \ (e \in E_{2,0}, \mu_1,\mu_2 \in k^\times )
$
from the definition of $\mathcal{A}$. 
We also have $E_{2,0} \simeq k \langle X_0,X_1,X_2,X_3 \rangle/(X_jX_i-q''_{ji}X_iX_j, 1+X_0^6+X_1^6+X_3^3, X_0X_1-\lambda_2 X_2^2)_{i,j} \ (\lambda_2 \in k^\times)$, which is obtained from the identifications $X_0=x_0^2x_2^{-1}, X_1=x_1^2x_2^{-1}, X_2=x_0x_1x_2^{-1}$ and $X_3=x_3x_2^{-1}$. 
Here, the $q''_{ij}$ are defined by the matrix
\vspace{-2mm}
\[
(q''_{ij})=
\left( \begin{smallmatrix}
    1 & \omega & \omega^2 &\omega \\
    \omega^2 & 1 & \omega & \omega^2 \\
    \omega & \omega^2 & 1 & 1 \\
    \omega^2 & \omega  & 1 & 1
\end{smallmatrix}
\right).
\]
So, $Z(E_{2,0}) \simeq k[X,Y,W,U,V]/(X+Y+1+W, XY-\lambda_3 U^2,XYW- \lambda_4 V^2) \ (\lambda_3,\lambda_4 \in k^\times)$, which is obtained from the identifications $X=(x_0^2x_2^{-1})^3,Y=(x_1^2x_2^{-1})^3,W=(x_3x_2^{-1})^3, U=(x_0x_1x_2^{-1})^3$ and $ V=(x_0x_1x_2^{-1})(x_3x_2^{-1})$.
On the other hand, we define the inclusion $\phi: Z(E_{2,0})\rightarrow N_2$ in which $X,Y,W,U$ are mapped naturally and $V$ to $
 \left( \begin{smallmatrix}
    V & 0 \\
    0 & \omega V
\end{smallmatrix} \right)
$.
It is easy to see that $\phi(Z(E_{2,0})) \subset Z_2 $.
Because the choice of $\mu_1$ determines $\mu_2$ in the above form of $m$, 
the map $\phi$ induces $Z_2 \simeq Z(E_{2,0})$. 
Thus, one can show that $\op{Spec}(Z_2)$ has a $1$-dimensional singular locus by using the Jacobi criterion.

\textit{Step 3.} We consider the weight $(1,1,1,1)$ and take a quantum parameter which gives a noncommutative projective Calabi-Yau 2 scheme $\op{proj}(C'/(f'))$ whose point scheme is finite.
$\op{qgr}(C'/(f'))$ is thought of as the category of coherent modules of a sheaf $\mathcal{B}$ of algebras on the projective spectrum $\op{Proj}(k[t_0,t_1,t_2,t_3]/(t_0+t_1+t_2+t_3))$.

The number of the choices of quantum parameters $(q_{ij})$ which satisfy the conditions of Theorem \ref{thm2} and give a noncommutative projective Calabi-Yau scheme whose moduli space of point modules is finite is $20$ except permutating variables (we get the list below by using a computer and hand calculations):
\begin{align*}
&1.\left(\begin{smallmatrix}
 1 & 1 & 1 & 1 \\
 1 & 1 & -1 & -1 \\
1 & -1 & 1 & -1 \\
1 & -1 & -1 & 1 
\end{smallmatrix}\right),
2.\left(\begin{smallmatrix}
 1 & 1 &1 & 1 \\
1 & 1 & -i & i \\
1 & i & 1 & -i \\
1 & -i & i & 1 
\end{smallmatrix}\right),
3.\left(\begin{smallmatrix}
 1 & 1 & 1  & -1 \\
1 & 1 & -1 & 1 \\
1 & -1 & 1 & 1 \\
-1 & 1 & 1 & 1 
\end{smallmatrix}\right),
4.\left(\begin{smallmatrix}
 1 & 1 & 1  & -1 \\
1 & 1 & -i & -i \\
1 & i & 1 &i \\
-1 & i & -i & 1 
\end{smallmatrix}\right), \\
&5.\left(\begin{smallmatrix}
 1 & 1 & 1 & i \\
 1 & 1 & -1 & -i \\
 1 & -1 & 1 &-i \\
-i & i & i & 1 
\end{smallmatrix}\right),
6.\left(\begin{smallmatrix}
 1 & 1 & 1 & i \\
 1 & 1 & -i & -1 \\
1 & i& 1 & 1 \\
-i & -1 & 1 & 1 
\end{smallmatrix}\right),
7.\left(\begin{smallmatrix}
 1 & 1 & 1 & -i \\
 1 & 1 & -1 & i \\
 1  & -1 & 1 & i \\
i & -i & -i & 1 
\end{smallmatrix}\right),
8.\left(\begin{smallmatrix}
 1 & 1 & 1 & -i \\
 1 & 1 & -i & 1 \\
1 & i & 1 & -1 \\
i & 1 & -1 & 1 
\end{smallmatrix}\right), \\
&9.\left(\begin{smallmatrix}
 1 & 1 & -1 & -1 \\
 1 & 1 & -i & i \\
-1 & i & 1 & i \\
-1 & -i & -i & 1 
\end{smallmatrix}\right),
10.\left(\begin{smallmatrix}
 1 & 1 & -1 & i \\
1 & 1 & i & -1 \\
-1 & -i & 1 & -1 \\
-i & -1 & -1 & 1 
\end{smallmatrix}\right),
11.\left(\begin{smallmatrix}
 1 & 1 & -1 & -i \\
1 & 1 & -i & -1 \\
-1 & i & 1 & -1 \\
i & -1 & -1 & 1 
\end{smallmatrix}\right),
12.\left(\begin{smallmatrix}
 1 & 1 & i & i \\
1 & 1 & -i & -i \\
-i & i & 1 & -1 \\
-i & i & -1 & 1 
\end{smallmatrix}\right), \\
&13. \left(\begin{smallmatrix}
 1 & 1 & i & -i \\
1 & 1 & -i & i \\
-i & i & 1 & 1 \\
i & -i & 1 & 1 
\end{smallmatrix}\right),
14. \left(\begin{smallmatrix}
 1 & -1 & -1 & -1 \\
-1 & 1 & -1 & -1 \\
-1 & -1 & 1 & -1 \\
-1 & -1 & -1 & 1 
\end{smallmatrix}\right),
15. \left(\begin{smallmatrix}
 1 & -1 & -1 & -1 \\
-1 & 1 & -i & i \\
-1 & i & 1 & -i \\
-1 & -i & i & 1 
\end{smallmatrix}\right),
16. \left(\begin{smallmatrix}
 1 & -1 & -1 & i \\
-1 & 1 & -1 & i \\
-1 & -1 & 1 & i \\
-i & -i & -i & 1 
\end{smallmatrix}\right), \\
&17. \left(\begin{smallmatrix}
 1 & -1 & -1 & -i \\
-1 & 1 & -1 & -i \\
-1 & -1 & 1 & -i \\
i & i & i & 1 
\end{smallmatrix}\right),
18. \left(\begin{smallmatrix}
 1 & -1 & i & i \\
-1 & 1 & i & i \\
-i & -i & 1 & -1 \\
-i & -i & -1 & 1 
\end{smallmatrix}\right),
19.\left(\begin{smallmatrix}
 1 & i & i & i \\
-i & 1 & -i & i \\
-i & i & 1 & -i \\
-i & -i & i & 1 
\end{smallmatrix}\right),
20. \left(\begin{smallmatrix}
 1 & i & i & -i \\
-i & 1 & -i & -i \\
-i & i & 1 & i \\
i & i & -i & 1 
\end{smallmatrix}\right). 
\end{align*}

When we choose one $(q_{ij})$ of the above 20 quantum parameters, then for any $l$, $ \Gamma(D_{+}(t_l),\mathcal{B}) \simeq k \langle Y_1,Y_2,Y_3 \rangle/ (Y_iY_j-q_{ij}'Y_jY_i, Y_1^4+Y_2^4+Y_3^4+1)_{1 \leq i,j \leq 3}$, where $(q'_{ij})$ is represented by one of the following matrices (we can verify this with direct calculations) :
\[
(a).  \left(\begin{smallmatrix}
 1 & -1 & -1  \\
-1 & 1 & -1  \\
-1 & -1 & 1 
\end{smallmatrix}\right), \qquad
(b). \left(\begin{smallmatrix}
 1 & -i & i  \\
i & 1 & -i  \\
 -i & i & 1 
\end{smallmatrix}\right).
\]

We write  $Z_l':=Z(\Gamma(D_{+}(t_l),\mathcal{B})) $.
When $(q'_{ij})$ is type (a), $\op{Spec}(Z'_l)$
has 6 singular points because $Z_l'$ is generated by $Y_1^2,Y_2^2,Y_3^2$ and $Y_1Y_2Y_3$ as a $k$-algebra.
When $(q'_{ij})$ is type (b), $\op{Spec}(Z_l')$ has 3 singular points because $Z_l'$ is generated by $Y_1^4,Y_2^4,Y_3^4 $ and $Y_1Y_2Y_3$ as a $k$-algebra. 
Moreover, for any $(q_{ij})$ in the above table, if $\mathcal{B}$ is type (a) (resp. (b)) on $D_+(t_l)$ for some $l$, it is also type (a) (resp. (b)) on $D_+(t_l)$ for any other $l$.

\textit{Step 4.} If $\op{qgr}(C/(f))$ is equivalent to $\op{qgr}(C'/(f'))$ then, 
 we must have an isomorphism of schemes between $\op{Spec}(\mathcal{Z}_{\mathcal{A}})$ and $\op{Spec}(\mathcal{Z}_{\mathcal{B}})$ by \cite[Theorem 4.4]{burban2022morita} (cf.  \cite[Section 6]{artin1994}).
Since $\op{Spec}(\mathcal{Z}_{\mathcal{A}})$ has infinite singular points, but, $\op{Spec}(\mathcal{Z}_{\mathcal{B}})$ has finite singular points, such a situation does not happen.
Hence, we complete the proof.
\end{proof}


\subsubsection*{Acknowledgements}
The author would like to express his gratitude to his supervisor Professor Hajime Kaji for his encouragement.
He is also grateful to Professor Atsushi Kanazawa for telling him the articles \cite{liu2019donaldson}, \cite{liu2020donaldson}.
He would like to thank Professor Izuru Mori, Professor Bal\'{a}zs Szendroi, Professor Ryo Ohkawa, Professor Shinnosuke Okawa and Professor Kenta Ueyama for helpful comments.
In addition, he thanks Niklas Lemcke for proofreading his English.
This work is supported by Grant-in-Aid for JSPS Fellows (Grant Number 22KJ2923).

\printbibliography 


\end{document}